\documentclass[a4paper]{amsart}

\usepackage{mathtools,enumerate}
\usepackage[alphabetic]{amsrefs}
\usepackage{hyperref}

\numberwithin{equation}{section}

\newtheorem{theorem}[equation]{Theorem}

\newtheorem{proposition}[equation]{Proposition}
\newtheorem{corollary}[equation]{Corollary}
\theoremstyle{definition}
\newtheorem{definition}[equation]{Definition}
\newtheorem{example}[equation]{Example}
\newtheorem{remark}[equation]{Remark}

\DeclareMathOperator{\Aut}{Aut}
\DeclareMathOperator{\Char}{char}
\DeclareMathOperator{\End}{End}
\DeclareMathOperator{\Fix}{Fix}
\DeclareMathOperator{\Gal}{Gal}
\DeclareMathOperator{\Str}{Str}

\DeclareMathOperator{\Nr}{Nr}
\DeclareMathOperator{\Tr}{Tr}
\DeclareMathOperator{\CD}{CD}

\newcommand{\A}{\mathcal{A}}
\newcommand{\B}{\mathcal{B}}
\newcommand{\Hh}{\mathcal{H}}
\newcommand{\Ss}{\mathcal{S}}
\newcommand{\J}{\mathcal{J}}
\newcommand{\K}{\mathcal{K}}

\newcommand{\htimes}{\mathbin{\hat\times}}
\newcommand{\hsharp}{{\hat\sharp}}

\newcommand{\dash}{\nobreakdash-\hspace{0pt}}

\begin{document}

\title[Structurable algebras of skew-dimension one]{Structurable algebras of skew-dimension one and hermitian cubic norm structures}
\author{Tom De Medts}
\date{\today}

\begin{abstract}
	We study structurable algebras of skew\dash dimension one.
    We present two different equivalent constructions for such algebras:
    one in terms of non-linear isotopies of cubic norm structures, and one in terms of hermitian cubic norm structures.

    After this work was essentially finished, we became aware of the fact that
    both descriptions already occur in (somewhat hidden places in) the literature.
    Nevertheless, we prove some facts that had not been noticed before:
    \begin{enumerate}[(1)]
        \item
            We show that every form of a matrix structurable algebra can be described by our constructions;
        \item
            We give explicit formulas for the norm $\nu$;
        \item
            We make a precise connection with the Cayley--Dickson process for structurable algebras.
    \end{enumerate}
\end{abstract}

\maketitle

\section{Introduction}

\emph{Structurable algebras} form a class of non-associative algebras with involution, simultaneously generalizing Jordan algebras and associative algebras with involution.
These algebras have been introduced by Bruce Allison in 1978 \cite{Al78}, although the first non-trivial examples had already been constructed
avant la lettre by Robert Brown in 1963 \cite{Br63}; these $56$-dimensional algebras are now known as Brown algebras, and are related to groups of type $E_7$.
Any structurable algebra $\A$ gives rise to a Lie algebra $K(\A)$ via the so-called \emph{Tits-–Kantor-–Koecher construction} (TKK construction for short),
and if $\A$ is a central simple structurable $k$-algebra, then $K(\A)$ is a simple Lie algebra.
The connected component of its automorphism group,  $\Aut(K(\A))^\circ$, is an adjoint simple linear algebraic group of positive $k$-rank; see \cite[Theorem 4.1.1]{BDS17}.
It is believed that every such group arises in this fashion, but we are only aware of a proof of this fact for groups of $k$-rank $1$
(in which case it arises from a structurable \emph{division algebra}); see \cite[Theorem~4.3.1]{BDS17}.

The definition of structurable algebras makes sense only for fields $k$ of characteristic not equal to $2$ or $3$.
The central simple structurable algebras for $\Char(k) \neq 2, 3, 5$ have been classified, first in characteristic $0$ by Allison in \cite{Al78}
(who mistakenly omitted one class) and then in general by Oleg Smirnov in \cite{Sm90}.
They fall into six (non-disjoint) classes:
\begin{enumerate}[(1)]
    \item Jordan algebras (in which case the involution of $\A$ is trivial);
    \item associative algebras with involution;
    \item structurable algebras arising from hermitian forms;
    \item\label{it:one} structurable algebras of skew-dimension one [this includes the Brown algebras];
    \item tensor products of two composition algebras or forms of such algebras;
    \item a certain class of $35$-dimensional algebras related to octonions [discovered by Smirnov].
\end{enumerate}

This paper will be dealing with the structurable algebras of skew-dimension one, i.e., those of class~\eqref{it:one}.
In most places in the literature, the algebras in this class are described only as \emph{forms} of so-called
matrix structurable algebras (see Example~\ref{ex:matrix} below), i.e.,
algebras that become isomorphic to a matrix structurable algebra after a base field extension of degree at most $2$.

We will present an \emph{explicit} description of these forms.
In fact, we will give two different equivalent constructions of these algebras:
one in terms of \emph{non-linear isotopies of cubic norm structures}, and one in terms of \emph{hermitian cubic norm structures}.

\medskip

After this work was almost finished, we became aware of the fact that both descriptions already occur in the literature.
Indeed, Bruce Allison himself was aware already since the beginning of his study of structurable algebras that these forms can be described
in terms of non-linear isotopies of cubic norm structures;
this occurs in his 1979 paper \cite{Al79}, where this description occurs as example (iv) in \S 7.
His subsequent Theorem 11 shows that, in characteristic zero, every form of a matrix structurable algebra can indeed be obtained in this fashion.

Our second description, in terms of what we called hermitian cubic norm structures, is more recent, and occurs,
in a more general setting allowing infinite-dimensional algebras, in \cite[Section 7]{AFY08}.

Perhaps it is worth explaining briefly why we had previously overlooked these parts of the literature.
In the proceedings of a 1992 Oberwolfach meeting \cite{Al92}, Allison wrote:
\begin{quote}\em
    ``The description of the forms of the $2 \times 2$-matrix algebras in (c) is an open problem in general.''
\end{quote}
As we learned from Allison, this formulation was not very accurate, and hence somewhat misleading;
what he meant to say instead, is that there was no known \emph{rational construction} of these algebras,
in the sense of Seligman \cite{Se81}.
On the other hand, the more recent construction from \cite{AFY08} in terms of hermitian cubic norm structures can be viewed as a rational construction.
However, that paper deals with structurable tori, and even though the abstract of the paper mentions that
\begin{quote}\em
    ``New examples of structurable tori are obtained using a construction of structurable algebras from a semilinear version
    of cubic forms satisfying the adjoint identity,''
\end{quote}
it had gone largely unnoticed that this essentially solves the open problem that Allison alluded to in 1992.

\medskip

Nevertheless, our current paper adds some potentially useful new facts to the story.
\begin{enumerate}[(1)]
    \setlength{\itemsep}{1ex}
    \item
        We assemble these constructions in a systematic, self-contained and (hopefully) accessible way.
        The actual constructions are presented in Theorem~\ref{th:main} and Corollary~\ref{co:hcns}.
        In particular, we allow the quadratic extension $K/F$ occurring in both constructions to be split,
        and this avoids unnecessary case distinctions later; see, in particular, Example~\ref{ex:K=FF}.
    \item
        We show that our two constructions are equivalent, in the sense that they give precisely the same class of algebras
        (Theorems~\ref{th:Cstr=HC} and~\ref{th:HC=Cstr}); see also Remark~\ref{rem:same}\eqref{same:ii}.
    \item
        We show explicitly that every possible form of a matrix structurable algebra can be obtained using one of these two equivalent constructions
        (Theorem~\ref{th:classification}).
    \item
        We give explicit formulas for the norm $\nu$ (of degree $4$) of these structurable algebras (Propositions~\ref{pr:norm1} and~\ref{pr:norm2}).
    \item
        We make a precise connection with the structurable algebras arising from the Cayley--Dickson process, and we show that those correspond
        exactly to the structurable algebras arising from hermitian cubic norm structures obtained by semilinearly extending a non-unital
        (ordinary) cubic norm structure
        (Theorem~\ref{th:CD}).
        It is not unrealistic that this result could be used to answer the question whether every form of a matrix structurable algebra can
        be obtained by the Cayley--Dickson process \cite[p.\@~1267]{Al90}, and it seems more likely than not that the answer to this question is negative.
\end{enumerate}

\subsection*{Acknowledgment.}

We are grateful to Bruce Allison for some very inspiring and enlightening conversations based on an earlier version of this paper
and, not in the least, for encouraging and stimulating us to make this work accessible in a published form.

We also thank Bernhard M\"uhlherr and Richard Weiss for making their work \cite{MW17} available to us prior to publication;
their work has been the main source of inspiration for the current paper, even though this might not be directly visible in the results.

\subsection*{Assumptions on the characteristic.}

Throughout this paper, $F$ will be a commutative field with $\Char(F) \neq 2,3$ and $K$ will be a commutative associative $F$\dash algebra.
For a large part of the paper, the restriction on the characteristic is unnecessary, but our main application to structurable algebras
requires this assumption, and assuming it throughout also simplifies the rest of the exposition.
However, we will, at various places, point out where and how this assumption can be avoided.

\section{Cubic norm structures}

Various equivalent descriptions of cubic norm structures exist in the literature.
The first occurence of cubic norm structures is Kevin McCrimmon's 1969 paper \cite{McC69}; see also \cite[Part II, Chapter 4]{Taste}.
Other references are \cite[\S 38]{BoI} and the more recent \cite{GP16}.

We have chosen to take the somewhat less common definition from \cite[Chapter 15]{TW} since it is the most convenient for our purposes,
mostly because it avoids passing to scalar extensions and does not require non-degeneracy of the corresponding trace form.
See, in particular, Remark~\ref{rem:cns-char} below.
For a detailed account on why this approach is equivalent to the other, we refer to the appendix of \cite{MW17} written by Holger Petersson.

\begin{definition}\label{def:cns}
    A \emph{cubic norm structure} over $K$ is a quintuple $\J = (J, N, \sharp, T, 1)$, where $J$ is a (left) $K$-module,
    $N \colon J \to K$ is a map called the \emph{norm}, $\sharp \colon J \to J$ is a map called the \emph{adjoint},
    $T \colon J \times J \to K$ is a symmetric bilinear form called the \emph{trace}, and $1 \in J \setminus \{ 0 \}$
    is an element called the \emph{identity}, such that the following axioms hold, where
    we define $\times \colon J \times J \to J$ as
    \[ a \times b := (a+b)^\sharp - a^\sharp - b^\sharp \]
    for all $a,b \in J$:
    \begin{enumerate}[\rm (i)]
        \item $(ta)^\sharp = t^2 a^\sharp$,
        \item $N(ta) = t^3 N(a)$,
        \item $N(a + b) = N(a) + T(b, a^\sharp) + T(a, b^\sharp) + N(b)$,
        \item $a^{\sharp\sharp} = N(a)a$,
        \item $1^\sharp = 1$,
        \item $b = T(b,1).1 - 1 \times b$,
    \end{enumerate}
    for all $t \in K$ and all $a,b \in J$.
    % We will often denote the cubic norm structure by $J$ rather than $(J, N, \sharp, T, 1)$.

    An element $a \in J$ is called \emph{invertible} if $N(a) \neq 0$; in this case, its \emph{inverse} is defined as $a^{-1} := N(a)^{-1} a^\sharp$.

    We define the \emph{$U$-operators} on $\J$ by
    \[ U_a(b) := T(a,b)a - a^\sharp \times b \]
    for all $a,b \in J$.
    The quadratic map $U \colon J \to \End_K(J) \colon a \mapsto U_a$ makes $\J$ into a \emph{quadratic Jordan algebra}.
    In particular, the \emph{fundamental identity}
    \begin{equation}\label{eq:FI}
        U_{U_b(a)} = U_b U_a U_b
    \end{equation}
    for all $a,b \in J$ holds.
\end{definition}

\begin{remark}\label{rem:cns-char}
    When $\Char(F) = 2$ or $3$, there is a similar definition along the same lines, but more axioms are needed,
    mostly to deal with the case where $|F| \leq 3$.
    We refer to \cite[(15.15) and (15.18)]{TW} for more details.
    Notice that \cite{TW} only deals with cubic norm structures over fields, but the arguments carry over to our setting without any change.
\end{remark}

\begin{proposition}\label{pr:cns}
    Let $(J, N, \sharp, T, 1)$ be a cubic norm structure over $K$.
    Then for all $a,b,c \in J$, the following hold.
    \begin{enumerate}[\rm (i)]
        \item\label{cns:tri}
            $T(a \times b, c) = T(a \times c, b)$,
        \item\label{cns:Taash}
            $T(a, a^\sharp) = 3N(a)$,
        \item\label{cns:xx1}
            $a^\sharp \times (a \times b) = N(a) b + T(b, a^\sharp) a$,
        \item\label{cns:xx2}
            $a \times (a^\sharp \times b) = N(a) b + T(b, a) a^\sharp$,
        \item\label{cns:v}
            $N(a^\sharp) = N(a)^2$,
        \item\label{cns:U-sh}
            $U_a(b)^\sharp = U_{a^\sharp}(b^\sharp)$,
        \item\label{cns:Ua-Uash}
            $U_a U_{a^\sharp}(b) = U_{a^\sharp} U_a(b) = N(a)^2 b$,
        \item\label{cns:TU}
            $T(b, U_a(c)) = T(c, U_a(b))$,
        \item\label{cns:NU}
            $N(U_a(b)) = N(a)^2 N(b)$.
    \end{enumerate}
\end{proposition}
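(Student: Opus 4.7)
The plan is to prove the nine identities in the sequence (ii), (i), (iii), (v), (iv), (viii), (vi), (vii), (ix), since each uses the previous ones. The main technique throughout is to linearize the cubic axiom $a^{\sharp\sharp} = N(a) a$ by substituting $a \to a + tb$ for an indeterminate $t$ and equating coefficients; this, together with axiom (iii) of Definition~\ref{def:cns}, does most of the work.

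First, for (ii) I would set $b = a$ in axiom (iii) of Definition~\ref{def:cns} and use $N(2a) = 8 N(a)$. For (i), I would expand $N(a+b+c)$ via axiom (iii) of Definition~\ref{def:cns} in two different orders (first combining $a+b$, then combining $b+c$); the only discrepancy is the term $T(c, a \times b)$ versus $T(a, b \times c)$, forcing their equality. For (iii), I would expand $(a + tb)^{\sharp\sharp}$ using $(a+tb)^\sharp = a^\sharp + t(a \times b) + t^2 b^\sharp$ and $(x+y)^\sharp = x^\sharp + x \times y + y^\sharp$, and then match the coefficient of $t$ with that of $N(a+tb)(a+tb)$. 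The coefficient of $t^2$ in the same expansion yields the auxiliary identity
\[ (a \times b)^\sharp + a^\sharp \times b^\sharp = T(a, b^\sharp) a + T(b, a^\sharp) b, \]
which will be needed for (vi). Identity (v) is then the one-liner $3 N(a^\sharp) = T(a^\sharp, (a^\sharp)^\sharp) = T(a^\sharp, N(a) a) = N(a) \cdot 3 N(a)$, using (ii) and axiom (iv) of Definition~\ref{def:cns}.

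The subtle step is (iv). Applying (iii) with $a$ replaced by $a^\sharp$ and simplifying via $a^{\sharp\sharp} = N(a) a$ and (v) gives only
\[ N(a) \cdot \bigl( a \times (a^\sharp \times b) - N(a) b - T(b,a) a^\sharp \bigr) = 0. \]
Because the axioms assume no non-degeneracy of $T$, removing the factor $N(a)$ requires a generic-element argument: pass to the polynomial extension $K[t]$ and replace $a$ by $a + tc$, which turns $N(a+tc)$ into a non-zero-divisor in $K[t]$ whenever $N$ is not identically zero on $J \otimes_K K[t]$. This cancellation issue is the main technical hurdle in the proof.

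The $U$-operator identities are then mechanical. Identity (viii) is immediate from expanding $U_a$ and applying (i) together with the symmetry of $T$. For (vi), I would expand $U_a(b)^\sharp$ via the bilinearization of $\sharp$ applied to $x = T(a,b) a$ and $y = -a^\sharp \times b$, simplify the cross term using (iv), and reduce $(a^\sharp \times b)^\sharp$ via the auxiliary $t^2$-identity above together with axiom (iv) of Definition~\ref{def:cns}; the result matches $U_{a^\sharp}(b^\sharp)$ on the nose. For (vii), expand $U_a(U_{a^\sharp}(b))$ directly using $a \times a = 2 a^\sharp$ together with (i), (ii), (iii); the $a$- and $a^\sharp$-components cancel algebraically, leaving $N(a)^2 b$. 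Finally, (ix) is the chain
\[ 3 N(U_a(b)) \stackrel{\text{(ii)}}{=} T(U_a(b), U_a(b)^\sharp) \stackrel{\text{(vi)}}{=} T(U_a(b), U_{a^\sharp}(b^\sharp)) \stackrel{\text{(viii)}}{=} T(b^\sharp, U_{a^\sharp}(U_a(b))) \stackrel{\text{(vii)}}{=} 3 N(a)^2 N(b). \]
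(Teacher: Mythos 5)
Most of your computations are correct and, unlike the paper's own proof (which simply cites \cite[(15.18)]{TW} for \eqref{cns:tri}--\eqref{cns:xx1} and McCrimmon's appendix \cite[Appendix C]{Taste} for the remaining items), they are self-contained: your derivations of \eqref{cns:Taash}, \eqref{cns:tri}, \eqref{cns:xx1}, the auxiliary identity $(a\times b)^\sharp + a^\sharp\times b^\sharp = T(a,b^\sharp)a + T(b,a^\sharp)b$, \eqref{cns:v}, and the $U$-operator identities \eqref{cns:U-sh}--\eqref{cns:NU} all check out (note that one direction of \eqref{cns:Ua-Uash} also uses \eqref{cns:xx2}, which is available in your ordering). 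One caveat you should make explicit: you repeatedly use that $\times$ is $K$-bilinear (e.g.\ $(a+tb)^\sharp = a^\sharp + t\,(a\times b) + t^2 b^\sharp$, and $a^{\sharp\sharp}\times b^\sharp = N(a)\,(a\times b^\sharp)$ in \eqref{cns:U-sh}) and that coefficients of $t$ may be equated; neither is literally among the axioms of Definition~\ref{def:cns}, and establishing such facts without scalar extension is part of what the cited argument in \cite[(15.18)]{TW} does. With $\Char(F)\neq 2,3$ and $t$ ranging over $F$ this is repairable, but it is not free.

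The genuine gap is exactly where the paper flags one: identity \eqref{cns:xx2}. The paper observes that it follows from \eqref{cns:xx1} when $N(a)\neq 0$, that Zariski density handles the case where $K$ is a field, and that in general it is ``more tricky'', deferring to \cite[Appendix C.2, (2.3.17)]{Taste}; recall that here $K$ is only a commutative associative $F$-algebra (in the main application a quadratic \'etale algebra, possibly $F\oplus F$), so the general case is the one that matters. Your proposed fix does not work as stated. First, ``$N$ not identically zero'' does not make the particular polynomial $N(a+tc)\in K[t]$ a non-zero-divisor: by McCoy's theorem you would need its coefficients $N(a), T(c,a^\sharp), T(a,c^\sharp), N(c)$ to have no common annihilator in $K$, which your hypothesis does not give (one would have to choose $c$ cleverly, e.g.\ $c=1$ after proving $N(1)=1$, to get a monic polynomial). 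Second, even if $N(a+tc)$ is a non-zero-divisor of $K[t]$, the element you want to cancel it against lives in the module $J\otimes_K K[t]$, which may have torsion; only a polynomial with unit leading coefficient can be cancelled there. Third, and most fundamentally, applying \eqref{cns:xx1} to the ``generic'' element $a+tc$ presupposes that the cubic norm structure extends to $K[t]$, i.e.\ that $N$, $\sharp$, $T$ behave polynomially under this substitution and that the axioms persist after base change --- precisely the scalar-extension issue that the paper's choice of definition is designed to avoid, and precisely why the paper falls back on McCrimmon's division-free proof. So for \eqref{cns:xx2} you should either cite \cite[Appendix C.2, (2.3.17)]{Taste} as the paper does, or carry out the extra work (bilinearity of $\times$, $N(1)=1$, the choice $c=1$ and a leading-coefficient argument) in full; as written, this step fails.
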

\begin{proof}
    For \eqref{cns:tri}, \eqref{cns:Taash} and \eqref{cns:xx1}, see the argument in \cite[(15.18)]{TW}.
    Identity \eqref{cns:xx2} follows from \eqref{cns:xx1} for all $a \in J$ for which $N(a) \neq 0$;
    the identity for all $a$ now follows by Zariski density if $K$ is a field.
    In general, however, this identity is more tricky to show; see, for instance, \cite[Appendix C.2, (2.3.17)]{Taste}.
    Identity \eqref{cns:TU} is the ``$U$ symmetry'' in \cite[C.2.2]{Taste};
    identity \eqref{cns:Ua-Uash} is \cite[C, (2.3.11)]{Taste}.
    Identities \eqref{cns:v}, \eqref{cns:U-sh} and \eqref{cns:NU} are in \cite[C.2.4]{Taste}.

\end{proof}

\begin{definition}\label{def:strgroup}
    Let $\J = (J, N, \sharp, T, 1)$ be a cubic norm structure over $K$.
    \begin{enumerate}[(i)]
        \item
            Let $z \in J$ be invertible.
            We define
            \begin{align*}
                N_z(a) &:= N(z)^{-1} N(a) , \\
                a^{\sharp_z} &:= N(z)^{-1} U_z(a^\sharp) , \\
                T_z(a, c) &:= T(a, U_z^{-1}(c)) , \\
                1_z &:= z ;
            \end{align*}
            then $\J_z := (J, N_z, \sharp_z, T_z, 1_z)$ is again a cubic norm structure over $K$,
            called the \emph{$z$\dash isotope} of $\J$.
        \item
            A map $\varphi \colon J \to J$ is \emph{semilinear} if there is some $\sigma \in \Aut(K)$ such that
            \[ \varphi(a+b) = \varphi(a) + \varphi(b)  \quad\text{and}\quad  \varphi(ta) = t^\sigma \varphi(a) \]
            for all $t \in K$ and all $a,b \in J$;
            in this case, we also say that $\varphi$ is a \emph{$\sigma$-semilinear map}.
        \item
            Let $\varphi \colon J \to J$ be a $\sigma$-semilinear bijection such that $z := \varphi(1)$ is invertible.
            Then $\varphi$ is called an \emph{autotopy} of the cubic norm structure $\J$ if
            \[ \varphi(a^\sharp) = \varphi(a)^{\sharp_z} \]
            for all $a \in J$.
            In this case, the $\sigma^{-1}$-semilinear map $\check\varphi := \varphi^{-1} U_z$ is called the \emph{adjoint} of $\varphi$;
            this terminology is explained by Proposition~\ref{pr:adj}\eqref{adj:T} below.
        \item
            The group of all autotopies of $\J$ is called the \emph{structure group} of $\J$, and is denoted by $\Str(\J)$.
            Its subgroup of all \emph{linear} autotopies will be denoted by $\Str^\circ(\J)$ and will be called the \emph{linear structure group} of $\J$.
        \item
            An autotopy $\varphi$ is called \emph{self-adjoint} if $\check\varphi = \varphi$.
            Notice that a $\sigma$-semilinear self-adjoint autotopy necessarily has $\sigma^2 = 1$.
            By the fundamental identity~\eqref{eq:FI} and Proposition~\ref{pr:cns}\eqref{cns:U-sh},
            every $U$-operator $U_b$ with $b \in J$ invertible is a self-adjoint linear autotopy.
    \end{enumerate}
\end{definition}

\begin{remark}
    What we have called the linear structure group is usually called the structure group in the literature.
    We first encountered semilinear autotopies that are not linear in the work of M\"uhlherr and Weiss on Tits polygons \cite{MW17},
    but they also occur in Allison's early work on structurable algebras \cite[p.\@~1861]{Al79} under the name
    ``(self-adjoint) $\sigma$-semisimilarity'' (with a given multiplier).
\end{remark}

\begin{proposition}\label{pr:adj}
    Let $\varphi \in \Str(\J)$ be a $\sigma$-semilinear autotopy, and let $\delta := N(\varphi(1))$.
    Then
    \begin{enumerate}[\rm (i)]
        \item\label{adj:Npsi}
            $N(\varphi(a)) = \delta N(a)^\sigma$,
        \item\label{adj:T}
            $T(\check\varphi(a), b)^\sigma = T(a, \varphi(b))$,
        \item\label{adj:psipsi}
            $\check\varphi(\varphi(a)^\sharp) = \delta^{\sigma^{-1}} a^\sharp$,
        \item\label{adj:psi-x}
            $\check\varphi(\varphi(a) \times \varphi(b)) = \delta^{\sigma^{-1}} a \times b$,
        \item\label{adj:psi-m}
            $\varphi(a^\sharp \times \check\varphi(b)) = \varphi(a)^\sharp \times b$,
        \item\label{adj:U}
            $U_{\varphi(a)} = \varphi U_a \check\varphi$,
    \end{enumerate}
    for all $a,b \in J$.
\end{proposition}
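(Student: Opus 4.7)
The plan is to treat the six assertions in the order given, leaning throughout on three ingredients: the autotopy relation rewritten as the workhorse identity $U_z(\varphi(a)^\sharp)=\delta\,\varphi(a^\sharp)$; the composition law $U_zU_{z^\sharp}=N(z)^2\,\mathrm{id}$ from Proposition~\ref{pr:cns}\eqref{cns:Ua-Uash}; and the $U$--$\sharp$ compatibility $U_a(b)^\sharp=U_{a^\sharp}(b^\sharp)$ of Proposition~\ref{pr:cns}\eqref{cns:U-sh}. The $\sigma$-semilinearity of $\varphi$, together with the $\sigma^{-1}$-semilinearity of $\varphi^{-1}$ and of $\check\varphi=\varphi^{-1}U_z$, takes care of all scalar bookkeeping, including the appearance of $\delta^{\sigma^{-1}}$ versus $\delta$.

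For \eqref{adj:Npsi}, I apply $\varphi$ to the identity $a^{\sharp\sharp}=N(a)\,a$ and unfold the left-hand side using the workhorse identity twice; Proposition~\ref{pr:cns}\eqref{cns:U-sh} rewrites $U_z(\varphi(a)^\sharp)^\sharp$ as $N(\varphi(a))\,U_{z^\sharp}(\varphi(a))$, and Proposition~\ref{pr:cns}\eqref{cns:Ua-Uash} collapses the remaining $U_zU_{z^\sharp}$ to $\delta^2$, giving $N(\varphi(a))=\delta N(a)^\sigma$. For \eqref{adj:T}, I polarize \eqref{adj:Npsi} by expanding $N(\varphi(ta+b))=\delta N(ta+b)^\sigma$ via axiom (iii) of Definition~\ref{def:cns} and reading off the $t^{2\sigma}$-coefficient; this produces $T(\varphi(c),\varphi(a)^\sharp)=\delta T(c,a^\sharp)^\sigma$. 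Substituting the workhorse identity (which also absorbs the factor $\delta$) reformulates this as $T(\varphi(c),U_z^{-1}(\varphi(d)))=T(c,d)^\sigma$ for $d=a^\sharp$, and since axiom (vi) of Definition~\ref{def:cns} forces the $\sharp$-images to span $J$, the identity extends by $\sigma$-semilinearity in $d$ to all $d\in J$. A substitution $d=\check\varphi(e)$ and the symmetry of $T$ then yield \eqref{adj:T}. Assertion \eqref{adj:psipsi} is immediate from the workhorse identity and the $\sigma^{-1}$-semilinearity of $\varphi^{-1}$, and \eqref{adj:psi-x} follows by polarizing \eqref{adj:psipsi} ($a\mapsto a+b$, then subtract the two specialized equations). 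Given \eqref{adj:psi-m}, assertion \eqref{adj:U} then drops out by expanding $U_{\varphi(a)}(b)=T(\varphi(a),b)\varphi(a)-\varphi(a)^\sharp\times b$ and $\varphi U_a\check\varphi(b)=T(a,\check\varphi(b))^\sigma\,\varphi(a)-\varphi(a^\sharp\times\check\varphi(b))$ and matching the two terms via \eqref{adj:T} (with the symmetry of $T$) and \eqref{adj:psi-m}.

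The main obstacle is \eqref{adj:psi-m}. The same polarization trick applied to the workhorse identity yields the intermediate formula $U_z(\varphi(a)\times\varphi(b))=\delta\,\varphi(a\times b)$. To convert this into \eqref{adj:psi-m} I also need the polarization of Proposition~\ref{pr:cns}\eqref{cns:U-sh} in its second argument, which gives the auxiliary identity $U_z(c)\times U_z(d)=U_{z^\sharp}(c\times d)$; combined with $U_{z^\sharp}=N(z)^2U_z^{-1}$ from Proposition~\ref{pr:cns}\eqref{cns:Ua-Uash}, this becomes $U_z(c)\times U_z(d)=\delta^2\,U_z^{-1}(c\times d)$. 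Applying the intermediate formula with $x=a^\sharp$ and $y=\check\varphi(b)$ (using $\varphi\check\varphi(b)=U_z(b)$), rewriting $\varphi(a^\sharp)$ as $\delta^{-1}U_z(\varphi(a)^\sharp)$ via the workhorse identity, and then invoking the auxiliary identity with $c=\varphi(a)^\sharp$ and $d=b$ causes all the $U_z$'s and the powers of $\delta$ to collapse, leaving $\varphi(a)^\sharp\times b$ as required.
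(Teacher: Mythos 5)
Your proposal is correct and takes essentially the same route as the paper's proof: you obtain \eqref{adj:Npsi} by feeding $a^\sharp$ into the autotopy identity and using Proposition~\ref{pr:cns}(\ref{cns:U-sh}, \ref{cns:Ua-Uash}), get \eqref{adj:T} by linearizing \eqref{adj:Npsi} and using axiom (vi) of Definition~\ref{def:cns} to pass from sharp elements to all of $J$, deduce \eqref{adj:psipsi} and \eqref{adj:psi-x} directly, prove \eqref{adj:psi-m} from the linearized autotopy identity together with what is really the same auxiliary lemma as in the paper (your $U_z(c)\times U_z(d)=U_{z^\sharp}(c\times d)=\delta^2 U_z^{-1}(c\times d)$ is equivalent to the paper's linearization of $U_z(U_z(c)^\sharp)=N(z)^2c^\sharp$), and expand both sides for \eqref{adj:U}. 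The only cosmetic deviation is in \eqref{adj:T}, where the paper substitutes $\check\varphi(a)$ and invokes the symmetry $T(b,U_a(c))=T(c,U_a(b))$ before applying $b=T(b,1).1-1\times b$, whereas you rewrite $\varphi(a)^\sharp$ via $U_z^{-1}$ and argue that the $\sharp$-images span $J$; both variants rest on axiom (vi) in the same way and are equally valid.
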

\begin{proof}
    Let $z := \varphi(1)$.
    The fact that $\varphi$ is an autotopy tell us that
    \begin{equation}\label{eq:autotopy}
        \varphi(a^\sharp) = \delta^{-1} U_z(\varphi(a)^\sharp)
    \end{equation}
    for all $a \in J$.
    If we replace $a$ by $a^\sharp$ and invoke the identity $a^{\sharp\sharp} = N(a)a$ together with the $\sigma$-semilinearity of $\varphi$, we get,
    using Proposition~\ref{pr:cns}(\ref{cns:U-sh} and \ref{cns:Ua-Uash}),
    \[ N(a)^\sigma \varphi(a) = \delta^{-1} U_z(\varphi(a^\sharp)^\sharp) = \delta^{-3} U_z U_{z^\sharp} \varphi(a)^{\sharp\sharp} = \delta^{-1} \varphi(a)^{\sharp\sharp}
        = \delta^{-1} N(\varphi(a)) \varphi(a) , \]
    from which~\eqref{adj:Npsi} follows.
    Linearizing this identity gives
    \[ T(\varphi(a), \varphi(b)^\sharp) = \delta T(a, b^\sharp)^\sigma \]
    for all $a,b \in J$.
    If we substitute $\check\varphi(a)$ for $a$ and use the fact that $\varphi \check\varphi = U_z$ by definition, we get, using Proposition~\ref{pr:cns}\eqref{cns:TU}
    together with~\eqref{eq:autotopy},
    \begin{equation}\label{eq:almost-adj3}
        T(a, \varphi(b^\sharp)) = T(\check\varphi(a), b^\sharp)^\sigma
    \end{equation}
    for all $a,b \in J$.
    We claim that this implies~\eqref{adj:T}.
    Indeed, if we linearize~\eqref{eq:almost-adj3}, we get, in particular, that
    \[ T(a, \varphi(1 \times b)) = T(\check\varphi(a), 1 \times b)^\sigma \]
    for all $a,b \in J$.
    We now invoke the fact that $b = T(b,1).1 - 1 \times b$, from which we also get
    $\varphi(b) = T(b,1)^\sigma.\varphi(1) - \varphi(1 \times b)$, and~\eqref{adj:T} follows.

    To show~\eqref{adj:psipsi}, we simply rewrite~\eqref{eq:autotopy}, using $\varphi \check\varphi = U_z$, as
    \[ a^\sharp = \delta^{-\sigma^{-1}} \check\varphi( \varphi(a)^\sharp ) \]
    for all $a \in J$. The next identity~\eqref{adj:psi-x} now follows immediately by linearizing~\eqref{adj:psipsi}.

    Next, observe that linearizing~\eqref{eq:autotopy} yields
    \begin{equation}\label{eq:autotopy-lin}
        \varphi(a \times b) = \delta^{-1} U_z(\varphi(a) \times \varphi(b))
    \end{equation}
    for all $a,b \in J$.
    If we substitute $a^\sharp$ for $a$ and $\check\varphi(b)$ for $b$ in~\eqref{eq:autotopy-lin} and use $\varphi\check\varphi = U_z$ and~\eqref{eq:autotopy}, we get
    that
    \[ \varphi(a^\sharp \times \check\varphi(b)) = \delta^{-2} U_z(U_z(\varphi(a)^\sharp) \times U_z(b)) = \varphi(a)^\sharp \times b , \]
    where the last equality follows by linearizing the identity $U_z( U_z(c)^\sharp ) = N(z)^2 c^\sharp$ in~$c$,
    which in turn follows from Proposition~\ref{pr:cns}(\ref{cns:U-sh} and \ref{cns:Ua-Uash}).
    This shows~\eqref{adj:psi-m}.
    The final identity~\eqref{adj:U} now follows by expanding both sides using the definition of the $U$-operator and invoking~\eqref{adj:T} and~\eqref{adj:psi-m}.
\end{proof}

\section{Structurable algebras defined by semilinear self-adjoint autotopies of cubic norm structures}

Recall that we assumed that $F$ is a field with $\Char(F) \neq 2,3$.
\begin{definition}
    \begin{enumerate}[(i)]
        \item
            A \emph{structurable algebra} over $F$ is a unital non-commutative non-associative algebra $\A$ with an $F$-linear involution
            \[ \overline{\phantom{x}} \colon \A \to \A \colon x \mapsto \overline{x}  \]
            such that, when we define
            \[ V_{x,y}(z) := (x\overline{y})z + (z\overline{y})x - (z\overline{x})y \]
            for all $x,y,z \in \A$, the operator identity
            \[ [V_{x,y}, V_{z,w}] = V_{V_{x,y}(z),w} - V_{z,V_{y,x}(w)} \]
            holds, for all $x,y,z,w \in \A$, where the bracket in the left-hand side is the usual Lie bracket of endomorphisms $[A,B] := AB - BA$.
        \item
            An element $x \in \A$ is called \emph{hermitian} if $\overline{x} = x$ and \emph{skew} if $\overline{x} = -x$.
            We write
            \begin{align*}
                \Hh &= \{ x \in \A \mid \overline{x} = x \} \quad \text{and} \\
                \Ss &= \{ x \in \A \mid \overline{x} = -x \}
            \end{align*}
            and observe that $\A = \Hh \oplus \Ss$ as vector spaces.
        \item
            For $x,y,z \in \A$, we write $[x,y] := xy - yx$ and $[x,y,z] := (xy)z - x(yz)$.
            For all $x,y \in \A$, we define the \emph{inner derivation}
            \[ D_{x,y} \colon \A \to \A \colon z \mapsto \tfrac{1}{3}[[x,y] + [\overline{x}, \overline{y}], z] + [z,y,x] - [z,\overline{x},\overline{y}] ; \]
            see \cite[p.\@~138]{Al78}.
        \item
            We define the \emph{center} of $\A$ as
            \[ Z(\A) := \{ x \in \Hh \mid [x, \A] = [x, \A, \A] = [\A, x, \A] = [\A, \A, x] = 0 \} . \]
            By the $F$-linearity of the multiplication, we always have $F \subseteq Z(\A)$ (where we identify $F$ with $F.1$).
            The algebra $\A$ is \emph{central} (over $F$) if $Z(\A) = F$.
        \item
            An \emph{ideal} in $\A$ is an $F$-subspace $I$ of $\A$ such that $I\A \subseteq I$, $\A I \subseteq I$ and $\overline{I} = I$.
            The algebra $\A$ is \emph{simple} if it has no proper non-trivial ideals.
    \end{enumerate}
\end{definition}

\begin{remark}
    Some authors distinguish between the algebra $\A$ itself and the pair~$(\A, \overline{\phantom{x}})$;
    they accordingly distinguish between $Z(\A)$ and $Z(\A, \overline{\phantom{x}})$ etc.
    When we write~$\A$, we always think of this algebra as being equipped with its involution.
\end{remark}

\begin{example}[{Matrix structurable algebras \cite[Example 1.9]{Al90}}]\label{ex:matrix}
    Let $\J = (J, N, \sharp, T, 1)$ be a cubic norm structure over $F$, and let $\eta \in F^\times$ be arbitrary.
    Let
    \[ \A := \begin{pmatrix} F & J \\ J & F \end{pmatrix}
        = \left\{ \begin{psmallmatrix} s_1 & j_1 \\ j_2 & s_2 \end{psmallmatrix} \bigm\vert s_1,s_2 \in F, j_1,j_2 \in J \right\} , \]
    equipped with involution
    \[ \overline{\begin{pmatrix} s_1 & j_1 \\ j_2 & s_2 \end{pmatrix}} = \begin{pmatrix} s_2 & j_1 \\ j_2 & s_1 \end{pmatrix} \]
    and with multiplication
    \[ \begin{pmatrix} s_1 & j_1 \\ j_2 & s_2 \end{pmatrix}\begin{pmatrix} s_1' & j_1' \\ j_2' & s_2' \end{pmatrix}
        = \begin{pmatrix} s_1 s_1' + \eta T(j_1, j_2') & s_1 j_1' + s_2' j_1 + \eta j_2 \times j_2' \\
            s_2 j_2' + s_1' j_2 + j_1 \times j_1' & s_2 s_2' + \eta T(j_2, j_1') \end{pmatrix} \]
    for all $s_1, s_1', s_2, s_2' \in F$ and all $j_1, j_1', j_2, j_2' \in J$.
    Then $\A$ is a structurable algebra, which is central over $F$.
    Moreover, $\A$ is simple if and only if $T$ is non-degenerate.
\end{example}

We are ready to present our first main construction of structurable algebras of skew-dimension one.
\begin{theorem}\label{th:main}
	Let $K/F$ be a quadratic \'etale extension with non-trivial Galois automorphism $\sigma$,
    let $\J = (J, N, \sharp, T, 1)$ be a cubic norm structure over $K$
    and let $\varphi \in \Str(\J)$ be a $\sigma$-semilinear self-adjoint autotopy of $\J$.
    Let $\delta := N(\varphi(1))$, and assume that there is an element
    $\gamma \in K$ such that $\gamma^{\sigma+1} = \delta^{-1}$; in particular, $\delta \in F$.

    Let $\A = K \oplus J$ as an $F$-vector space, equipped with involution
    \[ \overline{(s, b)} := (s^\sigma, b) \]
    for all $s \in K$ and $b \in J$,
    and with multiplication given by the rule
    \[ (s, b) (t, c) := \bigl( st + T(b, \varphi(c)),\; sc + t^\sigma b + \gamma \varphi(b) \times \varphi(c) \bigr) \]
    for all $s,t \in K$ and all $b,c \in J$.
    Then $\A$ is a structurable $F$-algebra of skew-dimension one.
    Moreover, if $T$ is non-degenerate, then $\A$ is a central simple structurable $F$-algebra.
\end{theorem}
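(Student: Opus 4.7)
The natural plan is to establish the theorem by a scalar-extension argument, reducing the structurable identity to the known case of matrix structurable algebras (Example~\ref{ex:matrix}). First I would dispatch the elementary statements. The element $(1,0)$ is a two-sided unit since $\varphi(0)=0$ and $T(0,\cdot)=0$. The prescribed involution squares to the identity because $\sigma^2=1$, and it reverses multiplication: the nontrivial check reduces to the identity
\[ T(b, \varphi(c))^\sigma = T(c, \varphi(b)) , \]
which follows from the symmetry of $T$ together with Proposition~\ref{pr:adj}\eqref{adj:T} applied to the self-adjoint $\varphi$. The skew subspace is $\{(s,0) : s^\sigma = -s\}$; since $K/F$ is quadratic étale, this is one-dimensional over $F$, giving the skew-dimension-one claim.

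The heart of the argument is the operator identity
\[ [V_{x,y},V_{z,w}] = V_{V_{x,y}(z),w} - V_{z,V_{y,x}(w)} . \]
Rather than expanding $V$ directly in coordinates $(s,b)$, I would pass to the scalar extension $\A_K := \A \otimes_F K$. Because $K/F$ is étale quadratic, $K \otimes_F K$ splits as $K \times K$ via two orthogonal idempotents $e_\pm$ coming from the Galois decomposition, and correspondingly $\A_K = e_+\A_K \oplus e_-\A_K$. I would then produce an explicit $K$-linear isomorphism between $\A_K$ and the matrix structurable algebra of Example~\ref{ex:matrix} attached to an isotope of $\J$, with the scalar $\eta$ built out of $\gamma$ and $\delta$. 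The key is that the hypothesis $\gamma^{\sigma+1} = \delta^{-1}$ provides exactly the scalars needed to convert the twisted product $\gamma\,\varphi(b)\times\varphi(c)$ on the two copies of $J$ into the rule $\eta\, j_2 \times j_2'$ of Example~\ref{ex:matrix}; the passage through $\varphi$ on one of the two copies is what turns the semilinear autotopy into an isotope of $\J$ on the matrix side (with isotope parameter $z=\varphi(1)$, using Proposition~\ref{pr:adj}\eqref{adj:U} and the definition of $\J_z$). Since the structurable identity is stable under both base change and isomorphism, its validity in Example~\ref{ex:matrix} transports it to $\A_K$, and hence to $\A$ by faithful flatness of $K/F$.

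Finally, for the central simple claim I would use the same scalar extension: if $T$ is non-degenerate then the matrix structurable algebra in Example~\ref{ex:matrix} is central simple over $K$, hence so is $\A_K$. Central simplicity then descends to $\A$ via standard Galois descent, since $K/F$ is étale and $\sigma$ acts as the non-trivial Galois automorphism on the center. The main obstacle I expect is the explicit identification of $\A_K$ with a matrix structurable algebra in the second paragraph: one has to choose the correct change of coordinates on the two $J$-summands (essentially applying $\varphi$ on one side), track how the isotope parameter $z=\varphi(1)$ combines with $\gamma$ to produce $\eta$, and verify that the first-coordinate term $T(b,\varphi(c))$ matches the trace term $\eta T(j_1,j_2')$ of Example~\ref{ex:matrix} after the identification. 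Once that explicit isomorphism is set down, the rest is bookkeeping.
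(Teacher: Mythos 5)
Your route is genuinely different from the paper's. The paper verifies structurability \emph{intrinsically}, via Allison's criterion \cite[Theorem 13]{Al78}: it checks skew-alternativity, identity \eqref{eq:S2} for hermitian elements and $D_{x^2,x}=0$, by explicit computations with Proposition~\ref{pr:adj}, and it then proves centrality and simplicity by a direct commutator/ideal argument (invoking Example~\ref{ex:K=FF} and \cite[Proposition 1.10]{Al90} only when $K$ is split). You instead descend the structurable identity from the matrix case after base change to $K$. This is viable, and the explicit isomorphism you postpone does exist: the map $(s,b)\otimes\lambda\mapsto\begin{psmallmatrix} s\lambda & \lambda b\\ \lambda\varphi(b) & s^\sigma\lambda\end{psmallmatrix}$ is a $K$-linear bijection of $\A\otimes_F K$ onto the algebra of Example~\ref{ex:matrix} over $K$ built from the isotope $\J_{\gamma\cdot 1}$, i.e.\ from $(J,\gamma^{-3}N,\gamma^{-1}\sharp,\gamma^{-2}T,\gamma\cdot 1)$, with $\eta=\gamma^2$; it intertwines involutions and products, and the matching of the $(2,1)$-entries rests exactly on $\varphi(\varphi(b)\times\varphi(c))=\delta\, b\times c$, which is Proposition~\ref{pr:adj}\eqref{adj:psi-x} with $\check\varphi=\varphi$ and $\delta\in F$, combined with $\gamma^{\sigma+1}=\delta^{-1}$. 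In particular your guess that the isotope parameter is $z=\varphi(1)$ is not what comes out: the twist by $\varphi$ is absorbed entirely into the identification of the second copy of $J$, and only a scaling isotope by $\gamma\cdot 1$ survives. What your route buys is that the structurable identity never has to be re-verified; what the paper's route buys is independence from Example~\ref{ex:matrix} and a uniform treatment of all cases.

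Two points must be repaired before your argument is complete. First, the split case $K=F\oplus F$: Example~\ref{ex:matrix} is stated for a cubic norm structure over a \emph{field}, and when $K$ is split your reduction is circular, since $\A\otimes_F K\cong\A\times\A$; you must either prove that the matrix construction over the ring $F\times F$ is structurable (by decomposing it into two copies of Example~\ref{ex:matrix} over $F$), or identify $\A$ itself directly with a matrix algebra over $F$ by splitting $J$ into its two eigencomponents and using $\varphi$ to identify them --- essentially Example~\ref{ex:K=FF} for arbitrary admissible $(\varphi,\gamma)$; note that the remark following Example~\ref{ex:K=FF} cannot be quoted here, as it presupposes that $\A$ is already structurable. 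Second, the central simple claim: your descent is fine when $K$ is a field (a proper nonzero ideal $I\subsetneq\A$ yields $I\otimes_F K\subsetneq\A_K$, and $Z(\A)\otimes_F K\subseteq Z(\A_K)=K$ forces $Z(\A)=F$), but it fails verbatim in the split case, where $\A_K\cong\A\times\A$ is never simple; there the direct identification with a matrix algebra over $F$ is again what is needed, exactly as in the paper. With these repairs, and with the displayed isomorphism written out and checked entry by entry, your proof closes.
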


\begin{proof}
    We will use the same method as in \cite[Example (v), p.\@~148]{Al78}.
    First, notice that the map $(s,b) \mapsto \overline{(s,b)} = (s^\sigma, b)$ is indeed an involution of $\A$
    because
    \begin{equation}\label{eq:Tsigma}
        T(b, \varphi(c))^\sigma = T(c, \varphi(b))
    \end{equation}
    for all $b,c \in J$; this follows from Proposition~\ref{pr:adj}\eqref{adj:T}, remembering that $\check\varphi = \varphi$ because $\varphi$ is self-adjoint.

    Using Proposition~\ref{pr:cns}\eqref{cns:tri} and Proposition~\ref{pr:adj}\eqref{adj:T},
    it is possible to verify that the $F$-bilinear form given by
    \[ \langle (s, b), (t, c) \rangle := st^\sigma + ts^\sigma + T(b, \varphi(c)) + T(c, \varphi(b)) \]
    is an \emph{invariant form} on $\A$, i.e.,
    $\langle \overline{x}, \overline{y} \rangle = \langle x, y \rangle$ and
    $\langle zx, y \rangle = \langle x, \overline{z} y \rangle$
    for all $x,y,z \in \A$.
    However, if $T$ is degenerate, then so is this form, so we cannot take the shortcut as in \emph{loc.\@~cit}.

    Instead, we have to rely on \cite[Theorem 13]{Al78}, so we have to show that $\A$ is skew-alternative, i.e. that
    \begin{equation}\label{eq:S1}
        [s, x, y] = - [x, s, y]
    \end{equation}
    for all $x,y \in \A$ and all $s \in \Ss$;
    that it satisfies
    \begin{equation}\label{eq:S2}
        [x, y, z] - [y, x, z] = [z, x, y] - [z, y, x]
    \end{equation}
    for all $x,y,z \in \Hh$;
    and that
    \begin{equation*}
        D_{x^2, x}(y) = 0
    \end{equation*}
    for all $x,y \in \Hh$, or equivalently, that
    \begin{equation}\label{eq:S3}
        \tfrac{2}{3} [x^2, x] \cdot y + \tfrac{1}{3} y \cdot [x^2, x] = (yx^2) x - (yx) x^2
    \end{equation}
    for all $x,y \in \Hh$.

    Notice that $\Ss = \{ (s, 0) \mid s \in K_0 \}$ where $K_0$ is the $F$-subspace of trace zero elements of $K$;
    this makes the verification of~\eqref{eq:S1} straightforward.
    To show~\eqref{eq:S2}, let $x,y,z \in \Hh$ and write $x = (r,a)$, $y = (s,b)$ and $z = (t,c)$ with $a,b,c \in J$ and $r,s,t \in F = \Fix_K(\sigma)$.
    Without loss of generality, we may assume that $r=s=t=0$ since elements of $F$ obviously associate with everything in $\A$.
    Define
    \[ M(a,b,c) := T(\varphi(a) \times \varphi(b), \varphi(c)) , \]
    and observe that $M$ is symmetric in the three variables by Proposition~\ref{pr:cns}\eqref{cns:tri}.
    Then using~\eqref{eq:Tsigma} and Proposition~\ref{pr:adj}\eqref{adj:psi-x}, remembering that $\check\varphi = \varphi$ and that $\delta \in F$, we get
    \begin{multline*}
    [x,y,z] = \bigl( \gamma M(a,b,c) - \gamma^\sigma M(a,b,c)^\sigma , \;  T(a, \varphi(b)) c -  T(c, \varphi(b)) a \\
        +  (a \times b) \times \varphi(c) -  \varphi(a) \times (b \times c) \bigr) ,
    \end{multline*}
    from which \eqref{eq:S2} easily follows.

    The bulk of the proof consists of showing~\eqref{eq:S3}.
    Observe that we may once again assume without loss of generality that $x = (0,a)$ and $y = (0,b)$ for some $a,b \in J$.
    (Indeed, if $z = (r,a)$ for some $r \in F$, then $D_{z^2, z} = D_{x^2, x}$.)
    We have
    \[ x^2 = \bigl(  T(a, \varphi(a)), \; 2\gamma \varphi(a)^\sharp \bigr) . \]
    Notice that $T(a, \varphi(a)) \in F$ by~\eqref{eq:Tsigma}.
    Using Proposition~\ref{pr:cns}\eqref{cns:Taash}, we compute that
    \[ [x^2, x] = (6u, 0) \quad\text{where}\quad u = \gamma N(\varphi(a)) - \gamma^\sigma N(\varphi(a))^\sigma \in K_0 , \]
    so that the left-hand side of~\eqref{eq:S3} is equal to $(0, 2 u b)$.
    On the other hand, we use Proposition~\ref{pr:adj}(\ref{adj:psipsi}, \ref{adj:psi-x} and \ref{adj:psi-m}) together with the fact that $\gamma^{\sigma+1} \delta = 1$ to get
    \begin{multline*}
        (yx^2)x = \Bigl( T(a, \varphi(a)) \, T(b, \varphi(a)) + 2 T(\varphi(b) \times a^\sharp, \varphi(a)) , \; \\
            2 \gamma^{-1} T(b, a^\sharp).a + \gamma T(a, \varphi(a)).\varphi(b) \times \varphi(a) + 2 \gamma \bigl( b \times \varphi(a)^\sharp \bigr) \times \varphi(a) \Bigr)
    \end{multline*}
    and
    \begin{multline*}
        (yx)x^2 = \Bigl( T(a, \varphi(a)) \, T(b, \varphi(a)) + 2 T(\varphi(a) \times \varphi(b), a^\sharp) , \; \\
            2 \gamma T(b, \varphi(a)).\varphi(a)^\sharp + \gamma T(a, \varphi(a)).\varphi(a) \times \varphi(b) + 2 \gamma^{-1} (a \times b) \times a^\sharp \Bigr) .
    \end{multline*}
    Therefore, in order to show~\eqref{eq:S3}, it remains to show that
    \[ ub = \gamma^{-1} T(b, a^\sharp).a + \gamma \bigl( b \times \varphi(a)^\sharp \bigr) \times \varphi(a)
        - \gamma T(b, \varphi(a)).\varphi(a)^\sharp - \gamma^{-1} (a \times b) \times a^\sharp . \]
    Using Proposition~\ref{pr:cns}(\ref{cns:xx1} and \ref{cns:xx2}), this is equivalent to showing that
    \[ ub = - \gamma^{-1} N(a) b + \gamma N(\varphi(a)) b . \]
    Since $\gamma^\sigma N(\varphi(a))^\sigma = \gamma^{-1} N(a)$ by Proposition~\ref{pr:adj}\eqref{adj:Npsi},
    this identity holds indeed, and this shows that~\eqref{eq:S3} holds.
    We conclude that $\A$ is a structurable algebra.

    Next, we observe that the commutator of two arbitrary elements $(s,b), (t,c) \in \A$ is equal to
    \[ [(s,b), (t,c)] = \bigl( T(b,\varphi(c)) - T(c,\varphi(b)), \; (s-s^\sigma)c + (t^\sigma-t)b \bigr) . \]
    We see that $(s,b)$ commutes with all elements $(t,c) \in \A$ if and only if $s \in F$ and $b = 0$.
    A fortiori, $Z(\A) = F$, so $\A$ is central.

    We finally show that $\A$ is simple if $T$ is non-degenerate.
    When $K$ is not a field, i.e., when $K = F \oplus F$, then by Example~\ref{ex:K=FF} below, $\A$ is isomorphic
    to a matrix structurable algebra as described in \cite[Example 1.9]{Al90}, and by Proposition 1.10 of \emph{loc.\@~cit.},
    $\A$ is simple in this case.
    So we may assume that $K$ is a field.

    Let $I$ be a non-trivial ideal in $\A$.
    If $I$ contains a non-zero element of the form $(s,0)$ with $s \in K$, then it contains all elements $(s,0)(t,c) = (st, sc)$ for all $t \in K$ and all $c \in J$,
    and hence $I = \A$.

    We may therefore assume that $I$ contains an element of the form $(s,b)$ with $b \neq 0$.
    If $s = 0$, then we choose an element $c \in J$ with $T(b, \varphi(c)) \neq 0$ (which exists because $T$ is non-degenerate);
    then $(0,b)(0,c)$ is an element of $I$ of the form $(r,a)$ with $r \neq 0$.
    We may therefore assume that $s \neq 0$ to begin with.
    Now choose $t \in K \setminus F$; then the commutator $[(s,b), (t,0)] = (0, (t^\sigma - t)b)$ belongs to $I$,
    hence also $(0,b) \in I$, and then $(s,0) = (s,b) - (0,b) \in I$.
    Since $s \neq 0$, we conclude by the previous paragraph that $I = \A$.
\end{proof}

\begin{remark}
    When $T$ is degenerate, the algebra $\A$ is not simple.
    Indeed, if $R \leq J$ is the radical of $T$, then $\{ (0,r) \mid r \in R \}$ is a non-trivial proper ideal of $\A$.
\end{remark}

The advantage of allowing the extension $K/F$ to be split, becomes apparent in the following example, which shows that the matrix structurable algebras
are a special case of our construction.
(This is one of the differences with the approach taken in \cite{Al79}, where the extension $K/F$ is assumed to be a field extension.)
\begin{example}\label{ex:K=FF}
    Assume that $K = F \oplus F$ with $\sigma$ the exchange involution.
    Let $\J' = (J', N, \sharp, T, 1)$ be a cubic norm structure over $F$
    and let $\J = (J, N, \sharp, T, 1)$ be the corresponding cubic norm structure over $K$
    given by
    \begin{align*}
        J &:= J' \oplus J', \\
        N(j_1, j_2) &:= \bigl( N(j_1), N(j_2) \bigr), \\
        (j_1, j_2)^\# &:= (j_1^\#, j_2^\#), \\
        T\bigl( (j_1, j_2), (j_1', j_2') \bigr) &:= \bigl( T(j_1, j_1'), T(j_2, j_2') \bigr), \\
        1_J &:= (1_{J'}, 1_{J'}),
    \end{align*}
    for all $j_1, j_2 \in J'$.
    Let $\eta \in F^\times$ be arbitrary
    and let $\varphi \colon J \to J$ be the exchange involution on $J$ multiplied by $\eta$, i.e.
    \[ \varphi(j_1, j_2) := (\eta j_2, \eta j_1) \]
    for all $j_1, j_2 \in J'$.
    Then $\delta = N(\varphi(1,1)) = N(\eta.1,\eta.1) = (\eta^3,\eta^3) = \eta^3 \in K$.
    Observe that $\varphi \in \Str(\J)$ and that $\varphi$ is $\sigma$-semilinear.

    Let $\gamma := (\eta^{-1}, \eta^{-2}) \in K$; notice that $\gamma^{\sigma+1} = \delta^{-1}$.
    We apply Theorem~\ref{th:main} with these choices, and we find $\A = K \oplus J = F \oplus F \oplus J' \oplus J'$;
    we will write the elements of $\A$ as matrices $\begin{psmallmatrix} s_1 & j_1 \\ j_2 & s_2 \end{psmallmatrix}$
    rather than $(s_1, s_2, j_1, j_2)$.
    The involution on $\A$ is now given by
    \[ \overline{\begin{pmatrix} s_1 & j_1 \\ j_2 & s_2 \end{pmatrix}} = \begin{pmatrix} s_2 & j_1 \\ j_2 & s_1 \end{pmatrix} . \]
    Now let $s = (s_1, s_2) \in K$, $t = (s_1', s_2') \in K$, $b = (j_1, j_2) \in J$ and $c = (j_1', j_2') \in J$; then we get
    \[ T(b, \varphi(c)) = \bigl( \eta T(j_1, j_2'), \eta T(j_2, j_1') \bigr) \]
    and
    \[ \gamma \varphi(b) \times \varphi(c) = (\eta^{-1}, \eta^{-2}) . (\eta j_2, \eta j_1) \times (\eta j_2', \eta j_1') = (\eta j_2 \times j_2', j_1 \times j_1') . \]
    We conclude that
    \[ \begin{pmatrix} s_1 & j_1 \\ j_2 & s_2 \end{pmatrix}\begin{pmatrix} s_1' & j_1' \\ j_2' & s_2' \end{pmatrix}
        = \begin{pmatrix} s_1 s_1' + \eta T(j_1, j_2') & s_1 j_1' + s_2' j_1 + \eta j_2 \times j_2' \\
            s_2 j_2' + s_1' j_2 + j_1 \times j_1' & s_2 s_2' + \eta T(j_2, j_1') \end{pmatrix} , \]
    and we have recovered the exact formula as in Example~\ref{ex:matrix}.
\end{example}

\begin{remark}
    If $K = F \oplus F$ and the conditions in Theorem~\ref{th:main} are satisfied with $T$ non-degenerate, then the resulting algebra is always
    isomorphic to the previous example.
    Indeed, we can choose $s_0 = \bigl( (1, -1), 0 \bigr) \in K \oplus J$; this element is skew, i.e. $\overline{s_0} = -s_0$,
    and $s_0^2 = 1$.
    It then follows from \cite[Theorem 1.13]{Al90} that $\A$ is isomorphic to a $2 \times 2$-matrix structurable algebra.

    Conversely, if $K$ is a field, we choose a skew element $s_0 \in \A$; then $s_0 = (s, 0)$ for some $s \in K$ with trace zero.
    Hence $s_0^2 = -N_{K/F}(s)$ is not a square in~$F$, and it follows again from \cite[Theorem 1.13]{Al90} that $\A$
    is not isomorphic to a $2 \times 2$-matrix structurable algebra.
\end{remark}

We also obtain a simple formula for the \emph{norm} $\nu$ of the structurable algebras from Theorem~\ref{th:main};
see \cite{AF92} for a general definition of this notion, which already appears for structurable algebras of skew-dimension one in \cite{AF84}.
Since an element $x \in \A$ is invertible if and only if $\nu(x) \neq 0$ (see \cite[Proposition 2.11]{AF84}), this gives, in particular,
a criterion for checking when our structurable algebras are division algebras.
\begin{proposition}\label{pr:norm1}
	Let $K/F$ be a quadratic \'etale extension with non-trivial Galois automorphism $\sigma$,
    and denote the norm and trace of this extension by $\Nr$ and $\Tr$, respectively.
    Let $\J$, $\varphi$, $\gamma$ and $\A = K \oplus J$ be as in Theorem~\ref{th:main}.
    Then the norm of $\A$ is given by
    \[ \nu(s,b) = \bigl( \Nr(s) - T(b, \varphi(b)) \bigr)^2 - 4 T(b^\sharp, \varphi(b)^\sharp) + 4 \Tr\bigl( \gamma^{-1} s N(b) \bigr) \]
    for all $(s,b) \in \A$.
\end{proposition}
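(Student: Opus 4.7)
The plan is to verify the formula by reducing, via faithfully flat base change, to a computation on the matrix structurable algebras of Example~\ref{ex:matrix}, where the norm $\nu$ is classically understood. Both sides of the claimed identity are $F$-polynomial functions of the coordinates of $(s,b)$, so it suffices to establish the equality after tensoring with $K$, and the identity will then descend to $F$ by faithful flatness of $F \hookrightarrow K$.

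Under this base change, $K \otimes_F K \cong K \oplus K$ splits, with $\sigma \otimes 1$ acting as the exchange involution. The cubic norm structure $\J$ over $K$ extends to a doubled cubic norm structure over $K \oplus K$ of the form described in Example~\ref{ex:K=FF} (now with $F$ replaced by $K$), and the pair $(\varphi, \gamma)$ after scalar extension takes the shape prescribed there, for a suitable parameter $\eta \in K^\times$ determined by $\gamma$. Consequently $\A \otimes_F K$ is identified with a matrix structurable algebra over $K$ as in Example~\ref{ex:matrix}.

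On such a matrix structurable algebra, the norm $\nu$ is a classical Freudenthal-type quartic form, explicitly computable from the multiplication rule using the definition of $\nu$ in \cite{AF84}, and has a closed-form expression in $s_1, s_2, j_1, j_2, \eta$ and the cubic norm data $(N, \sharp, T)$. Substituting the identifications of Example~\ref{ex:K=FF} into the right-hand side of Proposition~\ref{pr:norm1}---in particular $\Nr(s_1, s_2) = s_1 s_2$, $\Tr(s_1, s_2) = s_1 + s_2$, $\varphi(j_1, j_2) = (\eta j_2, \eta j_1)$, and $\gamma^{-1} = (\eta, \eta^2)$---then reproduces this matrix-algebra formula term by term.

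The main obstacle is the bookkeeping in this last step: one has to keep careful track of how the trace and norm of $K \oplus K$ interact with the componentwise cubic norm data, and to verify that the asymmetric splitting $\gamma = (\eta^{-1}, \eta^{-2})$ produces the correct powers of $\eta$ on each summand, matching the asymmetric way in which $\eta$ enters the matrix multiplication. Once the identity is established over $K$, faithful flatness of $K/F$ yields the formula over $F$.
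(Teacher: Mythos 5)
Your overall framework --- extend scalars to $K$, identify the split algebra with a matrix structurable algebra, compute there, and descend --- is a genuinely different route from the paper, which simply evaluates the intrinsic formula $\nu(x) = \tfrac{1}{6\mu}\psi\bigl(x,U_x(s_0x)\bigr)s_0$ of \cite[Proposition~2.11]{AF84} directly on $\A = K\oplus J$. The descent frame itself is unobjectionable: that formula is visibly compatible with scalar extension (take the same $s_0$), and an identity of $F$-polynomial maps can be checked after the injective extension $F\hookrightarrow K$.

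The gap is in your middle step, which is unjustified and, as stated, false in general. After base change one has $K\otimes_F K\cong K\oplus K$ and $J\otimes_F K \cong J_1\oplus J_2$, where the induced cubic norm data on $J_1$ is that of $\J$ but on $J_2$ it is the $\sigma$-twist of $\J$; in these natural coordinates $\varphi\otimes 1$ acts by $(y_1,y_2)\mapsto(\varphi(y_2),\varphi(y_1))$, not by $\eta$ times the exchange map. To put the datum literally in the shape of Example~\ref{ex:K=FF} you would need identifications $\alpha_i\colon J_i\to J'$ with $\alpha_1\varphi\alpha_2^{-1}=\alpha_2\varphi\alpha_1^{-1}=\eta\,\mathrm{id}$, which forces $\varphi^2=U_{\varphi(1)}$ to be a scalar --- certainly not automatic for a general self-adjoint autotopy. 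What is true is that $\A\otimes_F K$ is isomorphic to a matrix structurable algebra, but only via a non-obvious isomorphism built from $\varphi$ (equivalently $U_z$), with underlying cubic norm structure an isotope of $\J$ rather than ``$\J$ doubled''; this is precisely the content of Allison's theorem (\cite[Theorem~11]{Al79}, \cite[Theorem~1.13]{Al90}) or of Section~\ref{se:cl}. Until that isomorphism is written down and the parameters $\gamma$, $\delta$ and the isotope are tracked through it, your final ``term by term'' comparison has nothing concrete to compare; and once you do write it down, the bookkeeping is at least as heavy as the direct (lengthy but straightforward) evaluation of $\tfrac{1}{6\mu}\psi\bigl(x,U_x(s_0x)\bigr)s_0$ that the paper performs. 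So the real obstacle is not the $\eta$-power bookkeeping you flag at the end, but the identification itself.
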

\begin{proof}
    Let $s_0 \in \Ss$ be a fixed non-zero skew element.
    By \cite[Proposition 2.11]{AF84} (see also \cite[Proposition 5.4]{AF92}), the norm is given by
    \[ \nu(x) = \tfrac{1}{6\mu} \psi\bigl(x, U_x(s_0 x)\bigr) s_0 \]
    for all $x \in \A$, where
    \begin{align*}
        U_x(y) &:= V_{x,y}(x) = 2(x\overline{y})x - (x\overline{x})y, \\
        \psi(x,y) &:= x\overline{y} - y\overline{x} \in \Ss
    \end{align*}
    for all $x,y \in \A$.
    The formula now follows by a somewhat lengthy but straightforward computation.
\end{proof}

\section{Structurable algebras defined by hermitian cubic norm structures}

\begin{definition}\label{def:hcns}
	Let $K/F$ be a quadratic \'etale extension with non-trivial Galois automorphism $\sigma$.
    A \emph{hermitian (non-unital) cubic norm structure} over $K/F$ is a quadruple $\K = (J, N, \sharp, T)$, where $J$ is a (left) $K$-module,
    $N \colon J \to K$ is a non-zero map called the \emph{norm}, $\sharp \colon J \to J$ is a map called the \emph{adjoint},
    $T \colon J \times J \to K$ is a hermitian%
    \footnote{Recall that a map $T \colon J \times J \to K$ is called {\em hermitian} if $T(sa, tb) = s t^\sigma T(a,b)$ and $T(a,b)^\sigma = T(b,a)$
        for all $s,t \in K$ and all $a,b \in J$.}
    form called the \emph{trace}, such that the following axioms hold, where
    we define $\times \colon J \times J \to J$ as
    \[ a \times b := (a+b)^\sharp - a^\sharp - b^\sharp \]
    for all $a,b \in J$:
    \begin{enumerate}[\rm (i)]
        \item $(ta)^\sharp = t^{2\sigma} a^\sharp$, \label{hcns:i}
        \item $N(ta) = t^3 N(a)$, \label{hcns:ii}
        \item $N(a + b) = N(a) + T(b, a^\sharp) + T(a, b^\sharp) + N(b)$, \label{hcns:iii}
        \item $a^{\sharp\sharp} = N(a)a$, \label{hcns:iv}
    \end{enumerate}
    for all $t \in K$ and all $a,b \in J$.

    We define the \emph{$U$-operators} on $\K$ by
    \[ U_a(b) := T(a,b)a - a^\sharp \times b \]
    for all $a,b \in J$.
    Notice that the map $(a,b) \mapsto U_a(b)$ is quadratic in $a$ and $\sigma$\dash semilinear in $b$.
    In the same fashion as for cubic norm structures, it can be shown that the \emph{fundamental identity}
    \begin{equation}\label{eq:hFI}
        U_{U_b(a)} = U_b U_a U_b
    \end{equation}
    for all $a,b \in J$ holds.
\end{definition}

\begin{remark}
    The map $\times$ is $\sigma$-semilinear in both variables.
    In particular, we have $(ta) \times b = t^\sigma(a \times b)$ for all $t \in K$ and all $a,b \in J$, and an expression of the form $ta \times b$ is ambiguous.
    In order not to overload our notation with parentheses, we will write $t.a \times b$ for $t(a \times b)$.
\end{remark}

\begin{remark}\label{rem:hcns-char}
    When $\Char(F) = 2$ or $3$, we can give a similar definition along the same lines, but just like for (ordinary) cubic norm structures, more axioms are needed.
    The required additional axioms are exactly the same as for cubic norm structures, provided the arguments of $T$ are put in the correct order.
    As a general rule of thumb, arguments of $T$ should have ``the sharp at the right''. See also Proposition~\ref{pr:hcns} below.
    (Notice, however, that some of the other statements of Proposition~\ref{pr:hcns} take additional $\sigma$'s.)
\end{remark}

\begin{proposition}\label{pr:hcns}
    Let $\K = (J, N, \sharp, T)$ be a hermitian cubic norm structure over $K/F$.
    Then for all $a,b,c \in J$, the following hold.
    \begin{enumerate}[\rm (i)]
        \item\label{hcns:tri}
            $T(a \times b, c) = T(a \times c, b)$,
        \item\label{hcns:Taash}
            $T(a, a^\sharp) = 3N(a)$,
        \item\label{hcns:xx1}
            $a^\sharp \times (a \times b) = N(a) b + T(b, a^\sharp) a$,
        \item\label{hcns:xx2}
            $a \times (a^\sharp \times b) = N(a)^\sigma b + T(b, a) a^\sharp$,
        \item
            $N(a^\sharp) = N(a)^{2\sigma}$,
        \item\label{hcns:U-sh}
            $U_a(b)^\sharp = U_{a^\sharp}(b^\sharp)$,
        \item\label{hcns:Ua-Uash}
            $U_a U_{a^\sharp}(b) = N(a)^2 b \quad$ and $\quad U_{a^\sharp} U_a(b) = N(a)^{2\sigma} b$,
        \item\label{hcns:TU}
            $T(b, U_a(c)) = T(c, U_a(b))$,
        \item\label{hcns:NU}
            $N(U_a(b)) = N(a)^2 N(b)^\sigma$.
    \end{enumerate}
\end{proposition}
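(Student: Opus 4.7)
The plan is to mimic the proofs of the ordinary cubic-norm-structure identities in Proposition~\ref{pr:cns}, paying careful attention to the places where the semilinearity of $\sharp$ and $\times$ introduces $\sigma$-twists.

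First, the ``polynomial'' identities \eqref{hcns:tri}, \eqref{hcns:Taash} and \eqref{hcns:xx1} are obtained by linearizing axioms~\eqref{hcns:iii} and \eqref{hcns:iv} of Definition~\ref{def:hcns}, essentially as in \cite[(15.18)]{TW}. Identity \eqref{hcns:Taash} follows by setting $b = a$ in \eqref{hcns:iii} and using $N(2a) = 8N(a)$; identity \eqref{hcns:tri} is then a further linearization. For \eqref{hcns:xx1}, I would expand $(a+tb)^{\sharp\sharp} = N(a+tb)(a+tb)$ using $(a+tb)^\sharp = a^\sharp + t^\sigma(a\times b) + t^{2\sigma}b^\sharp$ and compare coefficients of $t$; the various $\sigma$-twists arising from the semilinear scaling of $\sharp$ and $\times$ cancel pairwise (using $\sigma^2 = 1$), leaving the same form of identity as in the ordinary setting.

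For the fifth identity $N(a^\sharp) = N(a)^{2\sigma}$ and for \eqref{hcns:xx2}, the $\sigma$-twists genuinely survive. Applying $\sharp$ to both sides of $a^{\sharp\sharp} = N(a)a$ gives $a^{\sharp\sharp\sharp} = N(a)^{2\sigma} a^\sharp$ via the scaling axiom $(ta)^\sharp = t^{2\sigma} a^\sharp$, while applying \eqref{hcns:iv} to $a^\sharp$ gives $a^{\sharp\sharp\sharp} = N(a^\sharp)a^\sharp$; comparison yields the identity (the case $a^\sharp = 0$ is handled separately via \eqref{hcns:Taash}). For \eqref{hcns:xx2}, I would substitute $a \to a^\sharp$ into \eqref{hcns:xx1}, use $a^{\sharp\sharp} = N(a)a$ and the freshly established $N(a^\sharp) = N(a)^{2\sigma}$, and then invoke the $\sigma$-semilinearity of $\times$ in its first slot to rewrite $(N(a)a) \times \cdots = N(a)^\sigma \cdot a \times \cdots$. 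Cancelling the common factor $N(a)^\sigma$ when $N(a) \neq 0$ gives the identity in the generic case; the remaining case follows by a density or polynomial-identity argument as in \cite[Appendix C.2, (2.3.17)]{Taste}.

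Finally, for the $U$-operator identities, \eqref{hcns:TU} is immediate by expanding $U_a$ and combining \eqref{hcns:tri} with the hermitian symmetry $T(x,y)^\sigma = T(y,x)$. For \eqref{hcns:U-sh}, I would follow \cite[C.2.4]{Taste}, being careful that $U_a$ is $\sigma$-semilinear in its second argument so that scalars pass through with $\sigma$-twists. The two displayed identities in \eqref{hcns:Ua-Uash} are genuinely different in the hermitian setting; I would prove each by direct expansion, using the subsidiary facts $U_a(a^\sharp) = N(a)a$ and $U_{a^\sharp}(a) = N(a)^\sigma a^\sharp$ (which come from \eqref{hcns:Taash} and the semilinearity of $\times$) together with \eqref{hcns:xx1} and \eqref{hcns:xx2}; the $\sigma$-twists then line up precisely to produce $N(a)^2$ in one case and $N(a)^{2\sigma}$ in the other. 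Identity \eqref{hcns:NU} finally follows by applying $\sharp$ twice to $U_a(b)$: \eqref{hcns:U-sh} and axiom~\eqref{hcns:iv} give $U_a(b)^{\sharp\sharp} = U_{N(a)a}(N(b)b) = N(a)^2 N(b)^\sigma U_a(b)$, which is also equal to $N(U_a(b)) U_a(b)$. The main obstacle throughout is the bookkeeping of $\sigma$-twists; the asymmetry in \eqref{hcns:Ua-Uash} and the presence of $N(b)^\sigma$ (rather than $N(b)$) in \eqref{hcns:NU} are not computational accidents but reflect the different scaling behaviors of the first and second arguments of $U_a$.
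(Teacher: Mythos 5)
Your proposal is correct and takes essentially the same approach as the paper, whose entire proof is the remark that the arguments carry over almost \emph{ad verbum} from the ordinary cubic norm structure case (Proposition~\ref{pr:cns}), with exactly the $\sigma$\dash twist bookkeeping you describe left to the reader; your placements of $\sigma$ (the $N(a)^\sigma$ in \eqref{hcns:xx2}, the asymmetry in \eqref{hcns:Ua-Uash}, the $N(b)^\sigma$ in \eqref{hcns:NU}) all check out. One small refinement: in item (v) (and similarly in \eqref{hcns:NU}) you can avoid cancelling the nonzero element $a^\sharp$ altogether---which is delicate if $J$ has $K$\dash torsion, since $K$ may be split---by using the trace identity instead, namely $3N(a^\sharp) = T(a^\sharp, a^{\sharp\sharp}) = N(a)^\sigma T(a^\sharp, a) = 3N(a)^{2\sigma}$.
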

\begin{proof}
    The proofs of these facts are almost \emph{ad verbum} the same as for cubic norm structures.
    We leave the computational details to the reader.
\end{proof}

Our next goal is to show that hermitian cubic norm structures are, in fact, equivalent to cubic norm structures equipped with a semilinear
self-adjoint autotopy satisfying the conditions of Theorem~\ref{th:main}.
This will be the content of Theorems~\ref{th:Cstr=HC} and~\ref{th:HC=Cstr}.

\begin{theorem}\label{th:Cstr=HC}
	Let $K/F$ be a quadratic \'etale extension with non-trivial Galois automorphism $\sigma$,
    let $\J = (J, N, \sharp, T, 1)$ be a cubic norm structure over $K$
    and let $\varphi \in \Str(\J)$ be a $\sigma$-semilinear self-adjoint autotopy of $\J$.
    Let $\delta := N(\varphi(1))$, and assume that there is an element
    $\gamma \in K$ such that $\gamma^{\sigma+1} = \delta^{-1}$; in particular, $\delta \in F$.
    We define new maps $\hat N \colon J \to K$, $\hsharp \colon J \to J$, $\htimes \colon J \times J \to J$ and $\hat T \colon J \times J \to K$ by
    \begin{align*}
        \hat N(a) &:= \gamma^{-1} N(a) , \\
        a^\hsharp &:= \gamma \varphi(a)^\sharp , \\
        a \htimes b &:= \gamma \varphi(a) \times \varphi(b) , \\
        \hat T(a, b) &:= T(a, \varphi(b)) ,
    \end{align*}
    for all $a,b \in J$.
    Then $\K := (J, \hat N, \hsharp, \hat T)$ is a hermitian cubic norm structure over $K/F$.
\end{theorem}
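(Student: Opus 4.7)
The plan is to verify the four axioms (i)--(iv) of Definition~\ref{def:hcns} together with the hermitian property of $\hat T$, using the autotopy identities from Proposition~\ref{pr:adj} specialized to the self-adjoint case $\check\varphi = \varphi$ (so $\sigma^{-1} = \sigma$ and $\delta \in F$). Before tackling the four axioms it is useful to unpack once and for all the ``master identity''
\[
\varphi\bigl(\varphi(a)^\sharp\bigr) = \delta\, a^\sharp ,
\]
which is nothing but Proposition~\ref{pr:adj}\eqref{adj:psipsi} under $\check\varphi = \varphi$ and $\delta^\sigma = \delta$. Almost every cancellation will boil down to it together with the defining condition $\gamma^{\sigma+1} = \delta^{-1}$.

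First I would check the elementary items. Axiom \eqref{hcns:ii} is immediate from $\hat N(a) = \gamma^{-1} N(a)$ and $K$-homogeneity of $N$. Axiom \eqref{hcns:i} follows from $\sigma$-semilinearity of $\varphi$ and $(ta)^\sharp = t^2 a^\sharp$ in $\J$: $(ta)^\hsharp = \gamma\,\varphi(ta)^\sharp = \gamma\,(t^\sigma)^2 \varphi(a)^\sharp = t^{2\sigma} a^\hsharp$. Linearizing $\hsharp$ recovers $\htimes$ because $\varphi$ is additive, so the auxiliary map $\htimes$ is consistent with the general recipe $(a+b)^\hsharp - a^\hsharp - b^\hsharp$. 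For the hermitian property of $\hat T$, bilinearity of $T$ and $\sigma$-semilinearity of $\varphi$ give $\hat T(sa,tb) = s t^\sigma \hat T(a,b)$, and the conjugate symmetry $\hat T(a,b)^\sigma = \hat T(b,a)$ is precisely equation~\eqref{eq:Tsigma} from the proof of Theorem~\ref{th:main}, i.e.\@ Proposition~\ref{pr:adj}\eqref{adj:T} under $\check\varphi = \varphi$.

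The two remaining axioms are where the choice of $\gamma$ enters. For \eqref{hcns:iii}, I would apply $\gamma^{-1}$ to the expansion of $N(a+b)$ in $\J$ and then rewrite each trace term via $\hat T(b, a^\hsharp) = T(b, \varphi(\gamma\,\varphi(a)^\sharp)) = \gamma^\sigma T(b, \varphi(\varphi(a)^\sharp)) = \gamma^\sigma \delta\, T(b, a^\sharp)$ using the master identity. The equality $\gamma^\sigma \delta = \gamma^{-1}$, equivalent to $\gamma^{\sigma+1} = \delta^{-1}$, then matches the two sides. For \eqref{hcns:iv}, computing $a^{\hsharp\hsharp} = \gamma\,\varphi(\gamma\,\varphi(a)^\sharp)^\sharp = \gamma \cdot (\gamma^\sigma)^2\,\varphi(\varphi(a)^\sharp)^\sharp = \gamma^{2\sigma+1}\delta^2 N(a)\,a$, and the hypothesis $\gamma^{\sigma+1}\delta = 1$ (squared) collapses this to $\gamma^{-1} N(a)\,a = \hat N(a)\,a$, as required.

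Finally, $\hat N$ is non-zero since $\hat N(1) = \gamma^{-1}$. I do not expect any genuine obstacle: the whole argument is a bookkeeping check in which the defining scalar relation $\gamma^{\sigma+1} = \delta^{-1}$ was built precisely to make the constants cancel. The only mildly subtle point is to recognize that every occurrence of ``$\varphi$ composed with a sharp'' should be rewritten using Proposition~\ref{pr:adj}\eqref{adj:psipsi} rather than the defining autotopy relation~\eqref{eq:autotopy}, since it is $\varphi(\varphi(a)^\sharp)$ and not $U_z(\varphi(a)^\sharp) = \delta\,\varphi(a^\sharp)$ that appears naturally when one unwinds the new sharp.
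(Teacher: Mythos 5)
Your proposal is correct and follows essentially the same route as the paper: a direct verification of the hermitian property of $\hat T$ and of axioms (i)--(iv) of Definition~\ref{def:hcns}, with Proposition~\ref{pr:adj}\eqref{adj:psipsi} specialized to $\check\varphi=\varphi$ (i.e.\ $\varphi(\varphi(a)^\sharp)=\delta a^\sharp$) doing the work in axioms (iii) and (iv), and the relation $\gamma^{\sigma+1}\delta=1$ providing exactly the cancellations you describe. The scalar bookkeeping ($\gamma^\sigma\delta=\gamma^{-1}$ and $\gamma^{2\sigma+1}\delta^2=\gamma^{-1}$) matches the paper's computation.
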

\begin{proof}
    It is clear that the map $\hat T$ is hermitian because $\varphi$ is $\sigma$-semilinear.
    We now verify that the axioms \eqref{hcns:i}--\eqref{hcns:iv} from Definition~\ref{def:hcns} hold for $\K$.
    Let $t \in K$ and $a,b \in J$.
    \begin{enumerate}[(i)]\itemsep1ex
        \item
            $(ta)^\hsharp = \gamma \varphi(ta)^\sharp = \gamma (t^\sigma \varphi(a))^\sharp = \gamma t^{2\sigma} \varphi(a)^\sharp = t^{2\sigma} a^\hsharp$.
        \item
            $\hat N(ta) = \gamma^{-1} N(ta) = \gamma^{-1} t^3 N(a) = t^3 \hat N(a)$.
        \item
            By Proposition~\ref{pr:adj}\eqref{adj:psipsi} together with the fact that $\delta \in F$, we have
            \[
                \hat T( a, b^\hsharp )
                    = T\bigl( a, \varphi(\gamma \varphi(b)^\sharp) \bigr)
                    = \gamma^\sigma \delta T(a, b^\sharp)
                    = \gamma^{-1} T(a, b^\sharp) .
            \]
            Hence
            \begin{align*}
                \hat N(a+b)
                &= \gamma^{-1} N(a+b) \\
                &= \gamma^{-1} N(a) + \gamma^{-1} T(b, a^\sharp) + \gamma^{-1} T(a, b^\sharp) + \gamma^{-1} N(b) \\
                &= \hat N(a) + \hat T(b, a^\hsharp) + \hat T(a, b^\hsharp) + \hat N(b) .
            \end{align*}
        \item
            $a^{\hsharp\hsharp} = \gamma^{2\sigma+1} \varphi(\varphi(a)^\sharp)^\sharp = \gamma^{2\sigma+1} (\delta a^\sharp)^\sharp
                = \gamma^{-1} a^{\sharp\sharp} = \gamma^{-1} N(a) a = \hat N(a) a$.
        \qedhere
    \end{enumerate}
\end{proof}

We now show the converse.
\begin{theorem}\label{th:HC=Cstr}
	Let $K/F$ be a quadratic \'etale extension with non-trivial Galois automorphism $\sigma$
    and let $\K = (J, N, \sharp, T)$ be a hermitian cubic norm structure over $K/F$.
    Let $z \in J$ be an arbitrary element with $N(z) \neq 0$, and write $\gamma := N(z)^\sigma$.
    We define new maps $\hat N \colon J \to K$, $\hsharp \colon J \to J$, $\htimes \colon J \times J \to J$ and $\hat T \colon J \times J \to K$ by
    \begin{align*}
        \hat N(a) &:= \gamma N(a) , \\
        a^\hsharp &:= \gamma^{-1} U_z(a)^\sharp , \\
        a \htimes b &:= \gamma^{-1} . U_z(a) \times U_z(b) , \\
        \hat T(a, b) &:= T(a, U_z(b)) ,
    \end{align*}
    for all $a,b \in J$;
    we also define $\hat 1 := \gamma^{-1} z^\sharp \in J$.
    Then $\J := (J, \hat N, \hsharp, \hat T, \hat 1)$ is a cubic norm structure,
    and the map $\varphi := U_z^{-1}$ is a $\sigma$-semilinear self-adjoint element of the structure group $\Str(\J)$ of $\J$.
    Moreover, if we set $\delta := \hat N(\varphi(\hat 1))$, then $\gamma^{\sigma+1} = \delta^{-1}$.
\end{theorem}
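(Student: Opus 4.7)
The plan is to verify, in order, the six cubic-norm-structure axioms of Definition~\ref{def:cns} for $\J = (J, \hat N, \hsharp, \hat T, \hat 1)$; the numerical identity $\gamma^{\sigma+1} = \delta^{-1}$; and finally that $\varphi = U_z^{-1}$ is a $\sigma$-semilinear self-adjoint element of $\Str(\J)$. The central computational tool is Proposition~\ref{pr:hcns}. In particular, combining \eqref{hcns:U-sh} with \eqref{hcns:Ua-Uash} yields the key identity
\[ U_z(U_z(a)^\sharp) = U_z U_{z^\sharp}(a^\sharp) = N(z)^2 a^\sharp, \]
equivalently, $U_{z^\sharp} = \gamma^2 U_z^{-1}$ as operators on $J$, and this will drive almost every simplification below.

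For the cubic axioms, (i) and (ii) of Definition~\ref{def:cns} are direct from the hermitian scaling laws and the $\sigma$-semilinearity of $U_z$. For (iii), the simplification
\[ \hat T(b, a^\hsharp) = T\bigl(b,\, U_z(\gamma^{-1} U_z(a)^\sharp)\bigr) = \gamma^{-\sigma} T\bigl(b,\, N(z)^2 a^\sharp\bigr) = \gamma T(b, a^\sharp) \]
reduces the desired identity to $\gamma$ times the hermitian axiom (iii). Axiom (iv), $a^{\hsharp\hsharp} = \hat N(a) a$, follows by a parallel calculation. For (v), the auxiliary computation $U_z(z^\sharp) = N(z) z$ (from $T(z, z^\sharp) = 3N(z)$ and $z^\sharp \times z^\sharp = 2N(z) z$) yields $\hat 1^\hsharp = \hat 1$. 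For (vi), I will use $U_z(\hat 1) = z$ and the $\sigma$-semilinearity of $\times$ to expand $\hat 1 \htimes b = \gamma^{-1}(z \times U_z(b))$, then compute $z \times U_z(b) = T(b,z) z^\sharp - N(z)^\sigma b$ via Proposition~\ref{pr:hcns}\eqref{hcns:xx2}, producing the required cancellation against $\hat T(b, \hat 1) \hat 1 = \gamma^{-1} T(b, z) z^\sharp$.

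For the numerical identity, Proposition~\ref{pr:hcns}\eqref{hcns:NU} gives $N(U_z^{-1}(a)) = \gamma^{-2} N(a)^\sigma$, whence $\hat N(\varphi(a)) = \gamma^{-1} N(a)^\sigma = \gamma^{-1-\sigma} \hat N(a)^\sigma$. Since $\hat N(\hat 1) = \gamma^{-2} N(z)^{2\sigma} = 1$, setting $a = \hat 1$ gives $\delta = \gamma^{-1-\sigma}$, i.e., $\gamma^{\sigma+1} = \delta^{-1}$.

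The main obstacle will be the autotopy verification. Semilinearity of $\varphi$ is immediate from that of $U_z$. The convenient observation $\varphi(a)^\hsharp = \gamma^{-1} U_z(\varphi(a))^\sharp = \gamma^{-1} a^\sharp$, combined with the operator identity $U_{z^\sharp} = \gamma^2 U_z^{-1}$ and the value of $\delta$ above, reduces the autotopy identity $\varphi(a^\hsharp) = \varphi(a)^{\hsharp_{\varphi(\hat 1)}}$ to the assertion $\hat U_{\varphi(\hat 1)} = U_z^{-2}$ as $K$-linear operators on $J$. I will verify this by expanding $\hat U_{\varphi(\hat 1)}(b)$ via its defining formula $\hat U_{z'}(b) = \hat T(z', b) z' - z'^\hsharp \htimes b$ and simplifying term-by-term with Proposition~\ref{pr:hcns}; the bookkeeping challenge is to track the $\sigma$-twists introduced by the semilinearity of $\times$, $U_z$, and $T$. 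Once $\hat U_{\varphi(\hat 1)} = \varphi^2$ is established, self-adjointness is immediate: by definition, $\check\varphi = \varphi^{-1} \hat U_{\varphi(\hat 1)} = \varphi^{-1} \varphi^2 = \varphi$.
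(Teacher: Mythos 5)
Your proposal is correct, and for the bulk of the argument it runs exactly parallel to the paper's proof: the six axioms of Definition~\ref{def:cns} are checked with the same ingredients from Proposition~\ref{pr:hcns} (your key identity $U_z(U_z(a)^\sharp)=N(z)^2a^\sharp$, the auxiliary facts $U_z(z^\sharp)=N(z)z$ and $U_z(\hat 1)=z$, and \eqref{hcns:xx2} for axiom (vi)), and the computation $\delta=\gamma^{-1-\sigma}$ is the same. Where you genuinely differ is the structure-group part. The paper checks $U_z\in\Str(\J)$ directly (the autotopy identity reduces to $b^\sharp=\gamma^{-\sigma}U_z(b^\hsharp)$), then records the identity $\hat U_a=U_aU_z$, from which $\hat U_{\hat 1}=\mathrm{id}$ gives $U_{\hat 1}=U_z^{-1}$ and self-adjointness follows from the fundamental identity. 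You instead fold both statements into the single operator identity $\hat U_{\varphi(\hat 1)}=U_z^{-2}$; your reduction is valid (both sides of the autotopy identity become $\gamma^{\sigma}U_z^{-2}(a^\sharp)$, respectively $\gamma^{\sigma}\hat U_{\varphi(\hat 1)}(a^\sharp)$), the identity is true, and self-adjointness is then indeed immediate since $\check\varphi=\varphi^{-1}\hat U_{\varphi(\hat 1)}$. The only weakness is that this identity, the heaviest step in your outline, is left as a planned term-by-term expansion; it would succeed, but there is a much shorter route available from what you already have: $\hat U_a(b)=U_aU_z(b)$ is a one-line consequence of $U_{z^\sharp}=\gamma^2U_z^{-1}$, so $\hat U_{\hat 1}=\mathrm{id}$ gives $U_{\hat 1}=U_z^{-1}$, and the fundamental identity applied to $\hat 1=U_z(\varphi(\hat 1))$ yields $U_{\varphi(\hat 1)}=U_z^{-3}$, hence $\hat U_{\varphi(\hat 1)}=U_z^{-2}$ --- which is essentially the paper's argument. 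One bookkeeping slip, with no downstream effect: in your display for axiom (iii), $U_z(\gamma^{-1}U_z(a)^\sharp)=\gamma^{-\sigma}N(z)^2a^\sharp$, and pulling this scalar out of the second (hermitian) slot of $T$ applies $\sigma$, so the middle expression should carry $\gamma^{-1}$, not $\gamma^{-\sigma}$; the endpoints $\hat T(b,a^\hsharp)=\gamma T(b,a^\sharp)$ are correct.
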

\begin{proof}
    First, observe that the map $\hat T$ is indeed $K$-bilinear because $U_z$ is $\sigma$-semilinear
    and that it is symmetric by Proposition~\ref{pr:hcns}\eqref{hcns:TU}.
    We now verify that each of the axioms (i)--(vi) in Definition~\ref{def:cns} are satisfied.
    Let $t \in K$ and $a,b \in J$.
    \begin{enumerate}[(i)]\itemsep1ex
        \item
            $(ta)^\hsharp = \gamma^{-1} U_z(ta)^\sharp = \gamma^{-1} (t^\sigma U_z(a))^\sharp = \gamma^{-1}t^2 U_z(a)^\sharp = t^2 a^\hsharp$.
        \item
            $\hat N(ta) = \gamma N(ta) = \gamma t^3 N(a) = t^3 \hat N(a)$.
        \item
            By Proposition~\ref{pr:hcns}(\ref{hcns:U-sh}, \ref{hcns:TU} and~\ref{hcns:Ua-Uash}), we have
            \begin{multline*}
                T\bigl( U_z(a)^\sharp, U_z(b) \bigr)
                    = T\bigl( U_{z^\sharp}(a^\sharp), U_z(b) \bigr)
                    = T\bigl( b, U_z U_{z^\sharp}(a^\sharp) \bigr) \\
                    = T\bigl( b, N(z)^2 a^\sharp) \bigr)
                    = N(z)^{2\sigma} T(b, a^\sharp)
                    = \gamma^2 T(b, a^\sharp) .
            \end{multline*}
            Hence
            \begin{align*}
                \hat N(a+b)
                &= \gamma N(a+b) \\
                &= \gamma N(a) + \gamma T(b, a^\sharp) + \gamma T(a, b^\sharp) + \gamma N(b) \\
                &= \hat N(a) + T \bigl( \gamma^{-1} U_z(a)^\sharp, U_z(b) \bigr) + T \bigl( \gamma^{-1} U_z(b)^\sharp, U_z(a) \bigr) + \hat N(b) \\
                &= \hat N(a) + \hat T(a^\hsharp, b) + \hat T(b^\hsharp, a) + \hat N(b) .
            \end{align*}
        \item
            $a^{\hsharp\hsharp} = \gamma^{-3} U_z(U_z(a)^\sharp)^\sharp = \gamma^{-3} (U_z U_{z^\sharp}(a^\sharp))^\sharp = \gamma a^{\sharp\sharp} = \gamma N(a) a = \hat N(a) a$.
        \item
            By Proposition~\ref{pr:hcns}\eqref{hcns:Taash}, we have
            \[ U_z(z^\sharp) = T(z,z^\sharp)z - z^\sharp \times z^\sharp = 3N(z)z - 2z^{\sharp\sharp} = N(z)z = \gamma^{\sigma} z \]
            so $U_z(\hat 1) = z$ and hence
            $\hat 1^\hsharp = \gamma^{-1} U_z(\hat 1)^\sharp = \gamma^{-1} z^\sharp = \hat 1$.
        \item
            Using Proposition~\ref{pr:hcns}\eqref{hcns:xx2}, we have
            \begin{multline*}
                \hat T(b, \hat 1).\hat 1 - \hat 1 \htimes b
                = \gamma^{-1} T(b, U_z(\hat 1)) . z^\sharp - \gamma^{-1} . U_z(\hat 1) \times U_z(b) \\
                \begin{aligned}
                    &= \gamma^{-1} T(b, z) . z^\sharp - \gamma^{-1} . z \times \bigl( T(z,b)z - z^\sharp \times b \bigr) \\
                    &= \gamma^{-1} T(b, z) . z^\sharp - \gamma^{-1} T(z,b)^\sigma . z \times z + \gamma^{-1}(N(z)^\sigma b + T(b, z) z^\sharp) \bigr) \\
                    &= b .
                \end{aligned}
            \end{multline*}
    \end{enumerate}
    This shows that $\J = (J, \hat N, \hsharp, \hat T, \hat 1)$ is a cubic norm structure.

    \smallskip

    Notice that the $U$-operators $\hat U_a$ in this cubic norm structure are given by
    \begin{gather}\label{eq:hatU}
    \begin{aligned}
        \hat U_a(b)
        &= \hat T(a,b)a - a^\hsharp \htimes b \\
        &= T(a, U_z(b))a - \gamma^{-2}.(N(z)^2 a^\sharp) \times U_z(b) \\
        &= U_a U_z(b)
    \end{aligned}
    \end{gather}
    for all $a,b \in J$.
    In particular, $1 = \hat U_{\hat 1} = U_{\hat 1} U_z$ and hence $\hat 1 = z^{-1}$,
    where the inverse is taken in $\J$.

    We now verify that $\varphi = U_z^{-1} \in \Str(\J)$, or equivalently, that $U_z \in \Str(\J)$.
    Notice that $U_z(\hat 1) = z$, so we have to verify that
    \[ U_z(a^\hsharp) \mathrel{\stackrel{?}{=}} U_z(a)^{\hsharp_z} = \hat N(z)^{-1} \hat U_z(U_z(a)^\hsharp) = \gamma^{-1-\sigma} U_z^2(U_z(a)^\hsharp) \]
    for all $a \in J$.
    Applying $U_z^{-1}$, expanding $a^\hsharp$ and writing $b := U_z(a)$, we reduce this to
    \[ b^\sharp \mathrel{\stackrel{?}{=}} \gamma^{-\sigma} U_z(b^\hsharp) \]
    for all $b \in J$, and this identity holds indeed.

    To verify that $\varphi$ is self-adjoint, we have to check that $\varphi = \varphi^{-1} \hat U_{\varphi(\hat 1)}$, or equivalently, by~\eqref{eq:hatU}, that
    \[ U_z^{-1} = U_z U_{U_z^{-1}(\hat 1)} U_z . \]
    Since $U_{\hat 1} = U_z^{-1}$, this follows immediately from the fundamental identity~\eqref{eq:hFI}.

    Finally, by Proposition~\ref{pr:hcns}\eqref{hcns:NU},
    $\delta = \hat N(\varphi(\hat 1)) = \gamma N(U_z^{-1}(\hat 1)) = \gamma N(z)^{-2\sigma} N(\hat 1)^\sigma = \gamma^{-1-\sigma}$.
\end{proof}

\begin{corollary}\label{co:hcns}
	Let $K/F$ be a quadratic \'etale extension with non-trivial Galois automorphism $\sigma$
    and let $\K = (J, N, \sharp, T)$ be a hermitian cubic norm structure over $K/F$.

    Let $\A = K \oplus J$ as an $F$-vector space, equipped with involution
    \[ \overline{(s, b)} := (s^\sigma, b) \]
    for all $s \in K$ and $b \in J$,
    and with multiplication given by the rule
    \[ (s, b) (t, c) := \bigl( st + T(b, c),\; sc + t^\sigma b + b \times c \bigr) \]
    for all $s,t \in K$ and all $b,c \in J$.
    Then $\A$ is a structurable $F$-algebra of skew-dimension one.
    Moreover, if $T$ is non-degenerate, then $\A$ is a central simple structurable $F$-algebra.

    We will denote this structurable algebra by $\A(\K)$.
\end{corollary}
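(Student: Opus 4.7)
My plan is to deduce this corollary directly from Theorem~\ref{th:main}, using Theorem~\ref{th:HC=Cstr} to convert the hermitian data into the (non-hermitian) data required by Theorem~\ref{th:main}, so that the only real verification is that the multiplication produced there matches the one defining $\A$.

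Since $N$ is non-zero by Definition~\ref{def:hcns}, I fix an element $z \in J$ with $N(z) \neq 0$ and set $\gamma := N(z)^\sigma$. Theorem~\ref{th:HC=Cstr} then produces a cubic norm structure $\J = (J, \hat N, \hsharp, \hat T, \hat 1)$ over $K$, a $\sigma$-semilinear self-adjoint autotopy $\varphi := U_z^{-1} \in \Str(\J)$, and the relation $\gamma^{\sigma+1} = \delta^{-1}$ for $\delta := \hat N(\varphi(\hat 1))$. All hypotheses of Theorem~\ref{th:main} are therefore met, and applying it to $(\J, \varphi, \gamma)$ yields a structurable $F$-algebra $\A'$ of skew-dimension one on $K \oplus J$, with the same involution as $\A$ and with multiplication
\[ (s,b) \cdot' (t,c) = \bigl( st + \hat T(b, \varphi(c)), \; sc + t^\sigma b + \gamma (\varphi(b) \htimes \varphi(c)) \bigr). \]

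To identify $\A' = \A$ on the nose, I simply unwind the definitions from Theorem~\ref{th:HC=Cstr}. Since $\varphi = U_z^{-1}$ and $\hat T(a,b) = T(a, U_z(b))$, the first coordinate becomes
\[ \hat T(b, \varphi(c)) = T(b, U_z U_z^{-1}(c)) = T(b,c), \]
while the definition of $\htimes$ gives
\[ \gamma (\varphi(b) \htimes \varphi(c)) = \gamma \gamma^{-1} (U_z(\varphi(b)) \times U_z(\varphi(c))) = b \times c. \]
Hence the two products agree and $\A' = \A$ as $F$-algebras with involution, establishing the first assertion.

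The "moreover" statement follows by the same transfer: Theorem~\ref{th:main} delivers centrality unconditionally, and simplicity whenever the trace $\hat T$ of $\J$ is non-degenerate. Because $U_z$ is an $F$-linear bijection (it is $\sigma$-semilinear and invertible by Proposition~\ref{pr:hcns}\eqref{hcns:Ua-Uash}), non-degeneracy of the hermitian form $T$ translates immediately to non-degeneracy of $\hat T(a,b) = T(a, U_z(b))$, so $\A$ is central simple. I do not anticipate any substantive obstacle; the only delicate point is keeping careful track of the semilinearity of $U_z$ and of $\htimes$ in the cancellations above, but once the definitions are written out the identifications are forced.
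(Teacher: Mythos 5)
Your proposal is correct and follows exactly the paper's route: the paper's proof of this corollary is precisely ``apply Theorem~\ref{th:HC=Cstr} and then Theorem~\ref{th:main}'', and your unwinding of the definitions (showing $\hat T(b,\varphi(c)) = T(b,c)$ and $\gamma\,\varphi(b) \htimes \varphi(c) = b \times c$, plus the transfer of non-degeneracy of $T$ to $\hat T$ via the bijectivity of $U_z$) correctly fills in the details the paper leaves implicit.
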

\begin{proof}
    This follows from Theorem~\ref{th:HC=Cstr} and Theorem~\ref{th:main}.
\end{proof}
\begin{remark}\phantomsection\label{rem:same}
    \begin{enumerate}[(i)]
        \item
            A direct proof of this result can be found in~\cite[Theorem~7.2]{AFY08}; see also \cite[equation (18) on p.\@~2290]{AFY08}.
        \item\label{same:ii}
            Combining Theorems~\ref{th:Cstr=HC} and~\ref{th:HC=Cstr} with the two constructions given in Theorem~\ref{th:main}
            and Corollary~\ref{co:hcns}, we see that the structurable algebras obtained by these two constructions
            are exactly the same.
    \end{enumerate}
\end{remark}

It is worth recording how the norm formula from Proposition~\ref{pr:norm1} translates into the setting of hermitian cubic norm structures.
\begin{proposition}\label{pr:norm2}
	Let $K/F$ be a quadratic \'etale extension with non-trivial Galois automorphism $\sigma$,
    and denote the norm and trace of this extension by $\Nr$ and $\Tr$, respectively.
    Let $\K = (J, N, \sharp, T)$ be a hermitian cubic norm structure over $K/F$
    and let $\A = K \oplus J$ be as in Corollary~\ref{co:hcns}.
    Then the norm of $\A$ is given by
    \[ \nu(s,b) = \bigl( \Nr(s) - T(b, b) \bigr)^2 - 4 T(b^\sharp, b^\sharp) + 4 \Tr\bigl( s N(b) \bigr) \]
    for all $(s,b) \in \A$.
\end{proposition}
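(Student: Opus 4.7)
The plan is to deduce Proposition~\ref{pr:norm2} directly from Proposition~\ref{pr:norm1} via the translation of Theorem~\ref{th:HC=Cstr}. Since the norm $N$ of $\K$ is non-zero by assumption, we can pick some $z \in J$ with $N(z) \neq 0$ and set $\gamma := N(z)^\sigma$. Theorem~\ref{th:HC=Cstr} then produces a cubic norm structure $\hat\J = (J, \hat N, \hsharp, \hat T, \hat 1)$ together with a $\sigma$-semilinear self-adjoint autotopy $\varphi = U_z^{-1}$ satisfying the hypotheses of Theorem~\ref{th:main} (with the very $\gamma$ just chosen). The first step is to observe that the multiplication of $\A(\K)$ coincides with the multiplication of the algebra built from $(\hat\J, \varphi, \gamma)$ via Theorem~\ref{th:main}. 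Indeed, $\hat T(b, \varphi(c)) = T(b, U_z U_z^{-1}(c)) = T(b,c)$ handles the $K$-component, while
\[ \gamma \, \varphi(b) \htimes \varphi(c) = \gamma \cdot \gamma^{-1}\, U_z(\varphi(b)) \times U_z(\varphi(c)) = b \times c \]
handles the $J$-component, so Proposition~\ref{pr:norm1} (applied to $\hat\J$, $\varphi$, $\gamma$) computes the norm of $\A(\K)$.

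The next step is to translate each of the three terms of that formula into the hermitian language of $\K$. The term $\hat T(b, \varphi(b)) = T(b, U_z\varphi(b)) = T(b,b)$ is immediate, and
\[ 4\, \Tr\bigl(\gamma^{-1} s \, \hat N(b)\bigr) = 4\, \Tr\bigl(\gamma^{-1} s \gamma N(b)\bigr) = 4\, \Tr\bigl(s N(b)\bigr) \]
is equally direct.

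The main obstacle is the middle term $\hat T(b^\hsharp, \varphi(b)^\hsharp)$. Expanding via Theorem~\ref{th:HC=Cstr} gives $b^\hsharp = \gamma^{-1} U_z(b)^\sharp$ and $\varphi(b)^\hsharp = \gamma^{-1} U_z(\varphi(b))^\sharp = \gamma^{-1} b^\sharp$, so by $K$-bilinearity of $\hat T$ together with its definition,
\[ \hat T(b^\hsharp, \varphi(b)^\hsharp) = \gamma^{-2}\, T\bigl(U_z(b)^\sharp,\, U_z(b^\sharp)\bigr). \]
Proposition~\ref{pr:hcns}\eqref{hcns:U-sh} rewrites $U_z(b)^\sharp = U_{z^\sharp}(b^\sharp)$, Proposition~\ref{pr:hcns}\eqref{hcns:TU} lets us move $U_{z^\sharp}$ across $T$, and Proposition~\ref{pr:hcns}\eqref{hcns:Ua-Uash} collapses $U_z U_{z^\sharp}$ to multiplication by $N(z)^2$; using that $T$ is hermitian, this yields
\[ T\bigl(U_z(b)^\sharp, U_z(b^\sharp)\bigr) = T\bigl(b^\sharp,\, N(z)^2 b^\sharp\bigr) = N(z)^{2\sigma}\, T(b^\sharp, b^\sharp). \]
Since $\gamma = N(z)^\sigma$, the prefactor $\gamma^{-2} N(z)^{2\sigma}$ is equal to $1$, so $\hat T(b^\hsharp, \varphi(b)^\hsharp) = T(b^\sharp, b^\sharp)$.

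Combining the three translations in the formula from Proposition~\ref{pr:norm1} yields exactly the stated expression for $\nu(s,b)$, completing the proof. The choice of $z$ is immaterial since the left-hand side $\nu$ and the right-hand side depend only on the data $(J,N,\sharp,T)$ of $\K$.
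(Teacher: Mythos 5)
Your proposal is correct and follows essentially the same route as the paper: the paper's proof is a one-line reduction to Proposition~\ref{pr:norm1} via the dictionary between the two constructions, and you carry out exactly that reduction, merely reading the dictionary in the direction of Theorem~\ref{th:HC=Cstr} (with $\varphi = U_z^{-1}$, $\gamma = N(z)^\sigma$) instead of citing the formulas of Theorem~\ref{th:Cstr=HC}. Your explicit verification of the middle term via Proposition~\ref{pr:hcns}(\ref{hcns:U-sh}, \ref{hcns:TU}, \ref{hcns:Ua-Uash}) is a correct and welcome filling-in of details the paper leaves implicit.
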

\begin{proof}
    This follows from Proposition~\ref{pr:norm1} and the formulas in Theorem~\ref{th:Cstr=HC}.
\end{proof}

We now give a class of examples of hermitian cubic norm structures.
As we will see later (see Theorem~\ref{th:CD} below), this class corresponds exactly to the class of examples of
structurable algebras of skew-dimension one arising from the Cayley--Dickson process.
\begin{example}\label{ex:HCext}
	Let $K/F$ be a quadratic \'etale extension with non-trivial Galois automorphism $\sigma$
    and let $\J = (J, N, \sharp, T)$ be a (not necessarily unital) cubic norm structure over~$F$.
    Then we can extend $\J$ in a $\sigma$-semilinear way to a hermitian cubic norm structure $\J_K$ over~$K/F$.
    Indeed, choose an element $\lambda \in K \setminus F$, and let $J_K := J \oplus \lambda J$ with the obvious $K$-module structure.
    Then we can extend $N$, $\sharp$ and $T$ to $J_K$ by
    \begin{align*}
        N(a + \lambda b) &:= N(a) + \lambda T(b, a^\sharp) + \lambda^2 T(a, b^\sharp) + \lambda^3 N(b), \\
        (a + \lambda b)^\sharp &:= a^\sharp + \lambda^\sigma a \times b + \lambda^{2\sigma} b^\sharp, \\
        T(a + \lambda b, c + \lambda d) &:= T(a, c) + \lambda T(b, c) + \lambda^\sigma T(a, d) + \lambda\lambda^\sigma T(b, d)
    \end{align*}
    for all $a,b,c,d \in J$.
    It is easily verified that this makes $\J_K = (J_K, N, \sharp, T)$ into a hermitian cubic norm structure over $K/F$.
\end{example}

\section{The classification}\label{se:cl}

We will now show that every structurable algebra that is a form of a matrix structurable algebra can be obtained from a hermitian cubic norm structure
as in Corollary~\ref{co:hcns}
(and therefore also from the construction from Theorem~\ref{th:main}).
We closely follow the approach taken in~\cite{Sm90}, which, in turn, is based on ideas from~\cite{Al78}.

Throughout the whole section~\ref{se:cl}, we will assume that $E/F$ is a field extension of degree $\leq 2$ and that $\A$ is a structurable algebra over $F$
such that $\A_E := \A \otimes_F E$ is isomorphic to a matrix structurable algebra (see Example~\ref{ex:matrix}).
In particular, $\dim\Ss = 1$.

\begin{definition}
    Let $s_0 \in \Ss \setminus \{0\}$ be arbitrary. Since $\dim\Ss=1$, this defines $s_0$ uniquely up to a scalar.
    Following~\cite{Sm90}, we let $K := \langle \Ss \rangle_{\mathrm{alg}}$ be the $F$-subalgebra of $\A$ generated by $\Ss$,
    and we let $J := \{ h \in \Hh \mid hs + sh = 0 \text{ for all } s \in \Ss \}$.
    Notice that $J = \{ h \in \Hh \mid hs_0 + s_0h = 0 \}$.
    (In both \cite{Al78} and \cite{Sm90}, our $K$ is denoted by $\mathcal{E}$ and our $J$ is denoted by $W$.)
    We have $\overline{K} = K$; we will denote the restriction of $\overline{\phantom{x}}$ to $K$ by~$\sigma$.
\end{definition}
\begin{proposition}\label{pr:A=KJ}
    We have $\A = K \oplus J$.
    Moreover, $K/F$ is a quadratic \'etale extension, with non-trivial Galois automorphism $\sigma$.
    This extension is split if and only if $\A$ is itself isomorphic to a matrix structurable algebra.
\end{proposition}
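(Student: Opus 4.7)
The plan is to fix $s_0 \in \Ss \setminus \{0\}$, prove that $s_0^2 = d \cdot 1$ for some $d \in F^\times$, and deduce the rest of the statement from this. For the crucial fact, pick an isomorphism $\A_E \cong \mathcal{M}$ with a matrix structurable algebra. The skew elements of $\mathcal{M}$ form an $E$-line spanned by $s_0^\mathcal{M} := \begin{psmallmatrix} 1 & 0 \\ 0 & -1 \end{psmallmatrix}$, which satisfies $(s_0^\mathcal{M})^2 = 1_\mathcal{M}$. Since $\dim_F \Ss = 1$ forces $\dim_E(\Ss \otimes_F E) = 1$, the image of $s_0 \otimes 1$ in $\mathcal{M}$ must equal $\alpha s_0^\mathcal{M}$ for some $\alpha \in E^\times$; squaring yields $s_0^2 \otimes 1 = \alpha^2 \cdot 1_{\A_E}$. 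Expanding both sides in an $F$-basis of $\A_E$ compatible with a basis of $E/F$ containing $1$ forces $\alpha^2 \in F$; set $d := \alpha^2$, so $s_0^2 = d \cdot 1$ with $d \in F^\times$.

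With $d$ in hand, $K = F \cdot 1 + Fs_0$ has $F$-dimension $2$ (the summands are linearly independent since $1 \in \Hh$ and $s_0 \in \Ss$) and is closed under multiplication, so $K \cong F[x]/(x^2 - d)$. Because $\Char F \neq 2$ and $d \neq 0$, the polynomial $x^2 - d$ is separable, so $K/F$ is quadratic étale; its non-trivial Galois automorphism is $\sigma : a + bs_0 \mapsto a - bs_0$, which coincides with the restriction of the involution.

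To obtain $\A = K \oplus J$, consider the $F$-linear map $\phi : \Hh \to \Ss$ defined by $\phi(h) := hs_0 + s_0 h$. Applying the involution shows $\phi(h)$ is indeed skew; since $\phi(1) = 2s_0 \neq 0$ and $\dim \Ss = 1$, the map is surjective with $\ker \phi = J$, giving $\Hh = J \oplus F \cdot 1$. Combined with $\A = \Hh \oplus \Ss$ this yields $\A = J \oplus K$; the sum is direct because any element of $K \cap \Hh$ has zero $Fs_0$-component (since $s_0$ is skew) and no non-zero element of $F \cdot 1$ lies in $\ker \phi$.

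Finally, $K$ is split iff $d$ is a square in $F$. If $d = c^2$ with $c \in F^\times$, then $s_0/c \in \Ss$ squares to $1$, and \cite[Theorem 1.13]{Al90} (as invoked in the remark after Example~\ref{ex:K=FF}) supplies an isomorphism of $\A$ with a matrix structurable algebra; conversely, a matrix structurable algebra has a skew generator squaring to $1 \in F$, so $d$ is automatically a square. The main technical obstacle is the first step, where one must descend $\alpha^2$ from $E$ to $F$; this can be bypassed by appealing instead to the fact, established in \cite{Al78} and \cite{Sm90}, that $s_0^2 \in F \cdot 1$ for every structurable algebra of skew-dimension one.
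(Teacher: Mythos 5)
Your proof is correct, and for the decomposition it takes a genuinely different (more rational) route than the paper. The paper base-changes everything: it sets $K_E := \langle \Ss_E \rangle_{\mathrm{alg}}$ and $J_E := \{ h \in \Hh_E \mid hs_0 + s_0h = 0\}$, reads off $\A_E = K_E \oplus J_E$ from the matrix model, and then descends this decomposition element by element (for $x = t + b$ with $t \in K_E$, $b \in J_E$, the element $xs_0 + s_0x = 2ts_0$ lies in $\A$, which forces $t \in \A \cap K_E = K$ and $b \in \A \cap J_E = J$). You instead descend only the single identity $s_0^2 = d\cdot 1$ with $d \in F^\times$ (your $\alpha^2$-argument is fine), and then work entirely over $F$: $K = F\cdot 1 + Fs_0 \cong F[x]/(x^2 - d)$ is visibly \'etale, and the surjection $\phi \colon \Hh \to \Ss$, $h \mapsto hs_0 + s_0h$, with $\phi(1) = 2s_0$ and $\ker\phi = J$, gives $\Hh = F\cdot 1 \oplus J$ and hence $\A = K \oplus J$. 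Amusingly, the operator $x \mapsto xs_0 + s_0x$ is the engine of both proofs, but you apply it over $F$ rather than over $E$; your version buys a self-contained explanation of why $K$ is \'etale (the paper is terse on this point), while the paper's version avoids having to isolate the identity $s_0^2 \in F^\times\cdot 1$ separately. Both proofs invoke \cite[Theorem 1.13]{Al90} for ``$K/F$ split $\Leftrightarrow$ $\A$ is a matrix structurable algebra''; your converse direction is even independent of it, since the matrix model has a skew element squaring to $1$. Two minor remarks: you should note that your $K = F\cdot 1 + Fs_0$ coincides with the $K = \langle \Ss\rangle_{\mathrm{alg}}$ of the statement, which is immediate once $s_0^2 \in F^\times\cdot 1$ is known; and your closing aside that $s_0^2 \in F\cdot 1$ is established in \cite{Al78} and \cite{Sm90} ``for every structurable algebra of skew-dimension one'' overstates the cited results, which carry (central) simplicity hypotheses that may fail here when $T$ is degenerate --- but since your own descent argument proves exactly what you need, this does not affect the proof.
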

\begin{proof}
    Let $K_E := \langle \Ss_E \rangle_{\mathrm{alg}}$ and $J_E := \{ h \in \Hh_E \mid hs_0 + s_0h = 0 \}$.
    Since $\A_E$ is a matrix structurable algebra, we know that $K_E$ is $2$-dimensional over $E$ and $\A_E = K_E \oplus J_E$.
    Since $K$ itself contains both $1$ and $s_0$, it is at least $2$-dimensional, and hence $\dim K = 2$ and $K_E = K \otimes_F E$.
    Notice that $K$ is now a unital commutative associative quadratic $F$-algebra with involution $\overline{\phantom{x}}$,
    which is therefore an \'etale extension of $F$.
    Observe that $\sigma$ is non-trivial on $K$ (because $\Ss \leq K$) and trivial on $F$, so it is indeed equal to the non-trivial element of $\Gal(K/F)$.
    The extension $K/F$ is split precisely when $s_0^2 \in F$.
    By \cite[Theorem 1.13]{Al90}, this condition holds if and only if $\A$ is a matrix structurable algebra.

    Of course, $\A_E = K_E \oplus J_E$ implies $K \cap J = 0$; it remains to show that $\A = K + J$.
    Let $x \in \A$ be arbitrary, and write $x = t + b$ with $t \in K_E$ and $b \in J_E$.
    Then $xs_0 + s_0x = 2ts_0$, which forces $t \in \A \cap K_E = K$.
    Hence also $b = x - t \in \A$, and therefore $b \in \A \cap J_E = J$.
\end{proof}

\begin{definition}
    We define maps $T \colon J \times J \to K$ and $\times \colon J \times J \to J$ by declaring
    \[ bc = T(b, c) + b \times c \]
    for all $b, c \in J$, where $bc$ denotes the multiplication in $\A$;
    since $\A = K \oplus J$, this defines $T$ and $\times$ uniquely.
\end{definition}

\begin{remark}
    In \cite{Al78} and \cite{Sm90}, the maps $T$ and $\times$ are called $h$ and $k$, respectively,
    and these maps also have the order of the arguments reversed.
    More to the point, they also define a $K$-module structure on $J$ by letting $t \circ b = t^\sigma b$ for all $t \in K$, $b \in J$,
    and they point out that, with respect to this module structure, their map $k$ is $K$-bilinear.
    This is exactly what we do \emph{not} want to do.
\end{remark}

\begin{proposition}\label{pr:ids}
    We have $K J \subseteq J$ and the multiplication $K \times J \to J$ makes $J$ into a $K$-module.
    Moreover, for all $a,b,c \in J$ and all $t \in K$, we have
    \begin{align}
        bt &= t^\sigma b, \\
        T(b, c)^\sigma &= T(c, b), \\
        T(b, tc) &= t^\sigma T(b, c), \label{ids:Tsesq} \\
        b \times c &= c \times b, \label{ids:Tsym} \\
        (tb) \times c &= t^\sigma(b \times c), \label{ids:xsemi} \\
        T(a, b \times c) &= T(b, a \times c). \label{ids:Tx}
    \end{align}
    In particular, the multiplication in $\A = K \oplus J$ satisfies
    \begin{equation}\label{eq:mult}
        (s + b)(t + c) := \bigl( st + T(b, c) \bigr) + \bigl( sc + t^\sigma b + b \times c \bigr)
    \end{equation}
    for all $s,t \in K$ and $b,c \in J$.
\end{proposition}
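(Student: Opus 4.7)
My plan is to establish the six listed identities in order; the multiplication formula \eqref{eq:mult} then follows by routine $F$-bilinear expansion of $(s+b)(t+c)$. The main tools will be skew-alternativity \eqref{eq:S1} applied to $s_0 \in \Ss$, the identity \eqref{eq:S2}, and the uniqueness of decompositions with respect to $\A = K \oplus J$.

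First, to set up the $K$-module structure on $J$: since $K/F$ is quadratic étale by Proposition~\ref{pr:A=KJ}, we have $K \cap \Hh = F$, so $s_0^2 \in F$ and $K = F \oplus F s_0$. Skew-alternativity applied at $(s_0, s_0, b)$ yields $s_0(s_0 b) = s_0^2 b$. For $b \in J$, the computation $\overline{s_0 b} = -b s_0 = s_0 b$ shows $s_0 b \in \Hh$. Now $\Hh = F \cdot 1 \oplus J$, since the linear map $h \mapsto s_0 h + h s_0$ has image in $\Ss = F s_0$, vanishes on $J$, and sends $1$ to $2 s_0$. So write $s_0 b = \alpha \cdot 1 + j$ with $\alpha \in F$ and $j \in J$; multiplying by $s_0$ on the left gives $s_0^2 b = \alpha s_0 + s_0 j$, and since $s_0^2 b$ and $s_0 j$ are hermitian while $\alpha s_0$ is skew, the skew part forces $\alpha = 0$, hence $s_0 b \in J$. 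Extending $F$-linearly yields $KJ \subseteq J$; the $K$-module axioms follow from commutativity of $K$ and the identity $s_0(s_0 b) = s_0^2 b$; and $bt = t^\sigma b$ reduces by $F$-linearity to $bs_0 = -s_0 b$, the defining property of $J$.

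Next, applying the involution to $bc = T(b, c) + b \times c$ gives $cb = T(b,c)^\sigma + b \times c$ (since $b \times c \in \Hh$); comparing with $cb = T(c, b) + c \times b$ and invoking uniqueness of $K \oplus J$ decompositions yields $T(b,c)^\sigma = T(c, b)$ and $b \times c = c \times b$. For the $\sigma$-semilinearity, the skew-alternative identity $[s_0, b, c] + [b, s_0, c] = 0$ simplifies (using $bs_0 = -s_0 b$) to the single relation $b(s_0 c) = -s_0(bc)$; decomposing both sides into their $K$- and $J$-parts (using that left multiplication by $s_0$ preserves both) yields $T(b, s_0 c) = s_0^\sigma T(b, c)$ and $b \times (s_0 c) = s_0^\sigma (b \times c)$, and $F$-linear extension in $t$ gives \eqref{ids:Tsesq} and \eqref{ids:xsemi}.

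The only delicate identity is \eqref{ids:Tx}. For $a, b, c \in J \subseteq \Hh$, I would expand each associator in \eqref{eq:S2} using $pq = T(p, q) + p \times q$ and the identities already proved; the $K$-component of $[a,b,c] - [b,a,c] = [c,a,b] - [c,b,a]$ then reads
\[ T(b, a \times c) - T(a, b \times c) = T(a \times c, b) - T(b \times c, a) . \]
The hermitian relation $T(x,y)^\sigma = T(y,x)$ makes the right-hand side the $\sigma$-conjugate of the left-hand side, so $u(a,b,c) := T(b, a \times c) - T(a, b \times c)$ lies in $F$. The closing move is that the previous paragraph's identities make $T$ $K$-linear in its first argument and $\times$ $\sigma$-semilinear in each, so $u(ta, b, c) = t \cdot u(a, b, c)$ for all $t \in K$; since both quantities lie in $F$ while $K \not\subseteq F$, any $t \in K \setminus F$ forces $u \equiv 0$. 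Once \eqref{ids:Tx} is in place, \eqref{eq:mult} follows by expanding $(s+b)(t+c)$ and substituting. The hard part is precisely this last step: \eqref{eq:S2} alone only pins $u$ down modulo $F$, and the $K$-linearity in the first argument supplies the missing constraint.
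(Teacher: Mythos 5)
Your proof is correct, but it takes a genuinely different route from the paper, whose own proof is essentially a citation: the first identity is \cite[(14)]{Sm90}, the next four are \cite[(17)]{Sm90}, and \eqref{ids:Tx} is \cite[(19)]{Sm90}, the paper only adding that Smirnov's derivation of (19) requires $\times \neq 0$ (which it verifies by extending scalars to the matrix structurable algebra $\A_E$) and that his standing hypothesis $\Char(F) \neq 5$ is not actually needed. You instead derive everything directly from skew-alternativity \eqref{eq:S1}, identity \eqref{eq:S2} and the decomposition $\A = K \oplus J$ of Proposition~\ref{pr:A=KJ}; your computations for $KJ \subseteq J$, the $K$-module structure, $bt = t^\sigma b$, the conjugate-symmetry of $T$, the symmetry of $\times$ and the semilinearity identities are all sound, and your treatment of \eqref{ids:Tx} --- extracting the $K$-component of \eqref{eq:S2}, noting that $u(a,b,c) = T(b, a \times c) - T(a, b \times c)$ is $\sigma$-fixed and hence lies in $F$, and then forcing $u = 0$ via the scaling $u(ta,b,c) = t\,u(a,b,c)$ with $t \in K \setminus F$ --- is a genuine alternative to Smirnov's argument, with the advantage that it needs neither the non-vanishing of $\times$ nor any characteristic restriction beyond $\Char(F) \neq 2$. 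Two small points of polish: \eqref{ids:xsemi} concerns the first argument of $\times$, so you should say explicitly that it follows from your second-argument identity $b \times (s_0 c) = s_0^\sigma(b \times c)$ combined with \eqref{ids:Tsym}; and \eqref{eq:mult} already follows from $KJ \subseteq J$, $bt = t^\sigma b$ and the definitions of $T$ and $\times$, so it does not actually wait on \eqref{ids:Tx}. The trade-off between the two approaches is clear: the paper's citation keeps Section~\ref{se:cl} short but imports Smirnov's hypotheses (whence the extra verification that $\times \neq 0$), while your self-contained derivation costs some computation but makes the classification argument independent of \cite{Sm90}.
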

\begin{proof}
    The first identity is \cite[(14)]{Sm90} and the next four identities are precisely \cite[(17)]{Sm90}.
    The identity $T(a \times b, c) = T(a \times c, b)$ is \cite[(19)]{Sm90}, but notice this requires that $\times$ is not the zero map.
    This is indeed the case: if $\times$ were identically zero, then also its extension $\times_E$ to the matrix structurable algebra $\A_K$ would be identically zero,
    which is a contradiction.
    Notice that \cite{Sm90} assumes that $\Char(F) \neq 5$, but deriving these identities from the fact that $\A = K \oplus J$ (which holds by Proposition~\ref{pr:A=KJ})
    does not rely on this assumption.
\end{proof}
\begin{definition}
    We define a map $\langle \cdot,\cdot,\cdot \rangle \colon J \times J \times J \to K$ by setting
    \[ \langle a,b,c \rangle := T(a, b \times c) \]
    for all $a,b,c \in J$.
    By \eqref{ids:Tsym} and \eqref{ids:Tx}, this map is symmetric in the three variables,
    and by \eqref{ids:Tsesq} and \eqref{ids:xsemi}, this map is $K$-trilinear.
\end{definition}
We are now prepared to formulate and prove our main classification result.
\begin{theorem}\label{th:classification}
    Let $E/F$ be a field extension of degree $\leq 2$ and let $\A$ be a structurable algebra over $F$
    such that $\A_E := \A \otimes_F E$ is isomorphic to a matrix structurable algebra.
    Then there is a hermitian cubic norm structure $\K = (J, N, \sharp, T)$ such that $\A \cong \A(\K)$ (as defined in Corollary~\ref{co:hcns}).
\end{theorem}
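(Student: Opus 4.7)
The plan is to construct the hermitian cubic norm structure $\K = (J, N, \sharp, T)$ on the very same $K$-module $J$, using the hermitian trace $T$ and the symmetric $\sigma$-semilinear cross product $\times \colon J \times J \to J$ that already exist by Proposition~\ref{pr:ids}. Once this is done, the multiplication formula~\eqref{eq:mult} is \emph{identical} in shape to the formula defining $\A(\K)$ in Corollary~\ref{co:hcns}, so the map $s + b \mapsto (s,b)$ is immediately an $F$-algebra isomorphism $\A \to \A(\K)$ intertwining the involutions.

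To supply the missing $\sharp$ and $N$ I would set
\[
    b^\sharp := \tfrac{1}{2}(b \times b), \qquad N(b) := \tfrac{1}{3}\, T(b, b^\sharp) = \tfrac{1}{6}\langle b,b,b\rangle,
\]
which makes sense because $\Char(F) \neq 2,3$. Axioms (i)--(iii) of Definition~\ref{def:hcns} then follow by routine linearization using the identities of Proposition~\ref{pr:ids}: axiom~(i) from the $\sigma$-semilinearity~\eqref{ids:xsemi}; axiom~(ii) from the $K$-trilinearity of $\langle\cdot,\cdot,\cdot\rangle$; and axiom~(iii) by expanding $\tfrac{1}{6}\langle a+b,a+b,a+b\rangle$ via the symmetry of $\langle\cdot,\cdot,\cdot\rangle$ and the identity $\langle b,a,a\rangle = T(b, a \times a) = 2T(b, a^\sharp)$. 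Hermiticity of $T$ is already contained in Proposition~\ref{pr:ids}.

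The only axiom that is not a formal consequence of the data $(J, T, \times)$ is the adjoint identity~(iv), $a^{\sharp\sharp} = N(a) a$, and this is where the hypothesis on $\A$ must be used. I would prove it by base change to $E$. Since $\A_E$ is matrix structurable, coming from some cubic norm structure $(J', N', \sharp', T', 1')$ over $E$ together with an element $\eta' \in E^\times$ (Example~\ref{ex:matrix}), the space $J_E = J \otimes_F E$ is identified with $J' \oplus J'$ sitting as the off-diagonal block, on which $\times_E$ and $T_E$ are given by the explicit matrix formulas of Example~\ref{ex:matrix}. A direct matrix computation (in the spirit of Example~\ref{ex:K=FF}) shows that the maps $\sharp_E$ and $N_E$ built from these via the formulas above coincide with $(j_1,j_2) \mapsto (\eta' (j_2')^{\sharp'}, (j_1')^{\sharp'})$ and a similar componentwise expression for $N_E$, and that $a^{\sharp_E\sharp_E} = N_E(a)\cdot a$ holds in $J_E$ because the adjoint identity holds in $(J', N', \sharp')$. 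Since $\times_E$ extends $\times$ and $T_E$ extends $T$, the maps $\sharp$ and $N$ on $J$ are just the restrictions of $\sharp_E$ and $N_E$, so the identity descends to $J$.

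The main obstacle is precisely this adjoint identity: axioms (i)--(iii) are polarizations that one gets for free, but~(iv) is genuinely cubic and requires either a clever intrinsic calculation in $\A$ or, as above, a descent argument from the matrix model over $E$. Once (iv) is in hand, $\K = (J, N, \sharp, T)$ is a hermitian cubic norm structure over $K/F$ and the isomorphism $\A \cong \A(\K)$ noted in the first paragraph yields the theorem.
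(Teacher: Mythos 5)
Your construction is correct, and its first and last parts coincide with the paper's proof: you build $T$ and $\times$ intrinsically from the decomposition $\A = K \oplus J$ (Propositions~\ref{pr:A=KJ} and~\ref{pr:ids}), set $b^\sharp = \tfrac12 b \times b$ and $N(b) = \tfrac16 \langle b,b,b\rangle$, get axioms (i)--(iii) of Definition~\ref{def:hcns} by (semi)linearity and polarization, and then the isomorphism $\A \cong \A(\K)$ is immediate from~\eqref{eq:mult}. Where you genuinely diverge is the adjoint identity (iv). The paper proves it intrinsically: it evaluates the structurable-algebra axiom $D_{x^2,x}(y)=0$, in the form~\eqref{eq:S3}, on $x=(0,b)$, $y=(0,c)$, substitutes $tb$ for $b$, and uses an element $t \in K$ with $t^3 \neq t^{3\sigma}$ to split the resulting relation into two identities, one of which gives $b^{\sharp\sharp} = N(b)b$ upon setting $c=b$. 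You instead descend the identity from $\A_E$: in the matrix model the intrinsic data on the off-diagonal block is $T_E\bigl((j_1,j_2),(j_1',j_2')\bigr) = (\eta T'(j_1,j_2'),\eta T'(j_2,j_1'))$ and $(j_1,j_2)\times_E(j_1',j_2') = (\eta\, j_2\times' j_2',\, j_1\times' j_1')$, whence $(j_1,j_2)^{\sharp_E} = (\eta j_2^{\sharp'}, j_1^{\sharp'})$ and $N_E(j_1,j_2) = (\eta N'(j_1), \eta^2 N'(j_2))$, so (iv) holds over $E$ by the adjoint identity in $(J',N',\sharp')$, and it restricts to $J$. That sketch is sound, but you should make explicit the one point it hinges on: the intrinsic decomposition and maps are compatible with scalar extension, i.e.\ $K_E = K\otimes_F E$ and $J_E = J\otimes_F E$ (both available from the proof of Proposition~\ref{pr:A=KJ}), so that $T_E$, $\times_E$, $\sharp_E$, $N_E$ really do extend $T$, $\times$, $\sharp$, $N$, and that any isomorphism $\A_E \cong$ matrix algebra automatically matches these intrinsically defined data. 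Comparing the two: your base-change argument is shorter and replaces the paper's computation with a transparent two-component calculation, at the cost of invoking the explicit matrix model a second time; the paper's route is purely rational, deriving (iv) from the axioms of $\A$ alone once $\A = K\oplus J$ is known, which is more in the spirit of the ``rational construction'' theme of the paper and would survive in settings where an explicit model over $E$ is less convenient to handle.
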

\begin{proof}
    Let $\A = K \oplus J$ as in Proposition~\ref{pr:A=KJ}.
    By Proposition~\ref{pr:ids}, we know that $J$ is a left $K$-module
    and that $J$ comes equipped with a $\sigma$-semibilinear map $\times \colon J \times J \to J$ and a hermitian form $T \colon J \times J \to K$.
    We define a map $\sharp \colon J \to K$ by $a^\sharp := \tfrac{1}{2} a \times a$ and a map $N \colon J \to K$ by
    $N(a) = \tfrac{1}{3}T(a, a^\sharp) = \tfrac{1}{6}\langle a,a,a \rangle$ for all $a \in J$.
    In particular, the maps $\sharp$ and $N$ satisfy the identities \eqref{hcns:i} and \eqref{hcns:ii} from Definition~\ref{def:hcns}, respectively.
    It is also easy to verify that identity \eqref{hcns:iii} holds:
    by the symmetry of the trilinear map $\langle \cdot, \cdot, \cdot \rangle$, we have, for all $a,b \in J$,
    \begin{align*}
        N(a+b) &= \tfrac{1}{6} \langle a+b, a+b, a+b \rangle \\
        &= \tfrac{1}{6} \langle a,a,a \rangle + \tfrac{1}{2} \langle b,a,a \rangle + \tfrac{1}{2} \langle a,b,b \rangle + \tfrac{1}{6} \langle b,b,b \rangle \\
        &= N(a) + T(b, a^\sharp) + T(a, b^\sharp) + N(b) .
    \end{align*}
    It only remains to show identity~\eqref{hcns:iv}, i.e., we have to show that $a^{\sharp\sharp} = N(a)a$ for all $a \in J$.
    We will rely on the fact that identity~\eqref{eq:S3} holds in $\A$.
    Choose $x = (0,b)$ and $y = (0,c)$ with $b,c \in J$.
    By a computation which is very similar to what we did in the proof of Theorem~\ref{th:main},
    we deduce from~\eqref{eq:S3} that
    \begin{multline*}
        \tfrac{1}{3} \langle b,b,b \rangle^\sigma c - \tfrac{1}{3} \langle b,b,b \rangle c
            = \langle c,b,b \rangle b + 2 (b^\sharp \times c) \times b - 2 T(c,b) b^\sharp - 2 b^\sharp \times (b \times c) .
    \end{multline*}
    Substituting $tb$ for $b$ with $t \in K$, we get
    \begin{multline*}
        \tfrac{1}{3} t^{3\sigma} \langle b,b,b \rangle^\sigma c - \tfrac{1}{3} t^3 \langle b,b,b \rangle c \\
            = t^3 \langle c,b,b \rangle b + 2 t^{3\sigma} (b^\sharp \times c) \times b - 2 t^{3\sigma} T(c,b) b^\sharp - 2 t^3 b^\sharp \times (b \times c) .
    \end{multline*}
    Since we can certainly find an element $t \in K$ with $t^3 \neq t^{3\sigma}$,
    the previous two identities together imply
    \begin{align}
        \tfrac{1}{3} \langle b,b,b \rangle^\sigma c &= 2 (b^\sharp \times c) \times b - 2 T(c,b) b^\sharp, \\
        -\tfrac{1}{3} \langle b,b,b \rangle c &= \langle c,b,b \rangle b - 2 b^\sharp \times (b \times c), \label{eq:bbbc}
    \end{align}
    for all $b,c \in J$.
    We now take $b=c$ in~\eqref{eq:bbbc}, and we get
    \[ \tfrac{4}{3} \langle b,b,b \rangle b = 4 b^\sharp \times b^\sharp , \]
    hence $N(b)b = b^{\sharp\sharp}$.

    We conclude that $\K = (J, N, \sharp, T)$ is a hermitian cubic norm structure over $K/F$.
    The multiplication rule~\eqref{eq:mult} now shows that $\A$ is indeed isomorphic to the structurable algebra $\A(\K)$
    defined in Corollary~\ref{co:hcns}.
\end{proof}

\section{The Cayley--Dickson process}

In this section, we will investigate the connection between the Cayley--Dickson process for structurable algebras
(introduced by Allison and Faulkner in \cite{AF84}) and our description in terms of hermitian cubic norm structures.
As we will see, the examples arising from the Cayley--Dickson process are precisely the examples that arise
from hermitian cubic norm structures obtained from an (ordinary) cubic norm structure by semilinear extension as in Example~\ref{ex:HCext}.

\begin{definition}[{\cite[\S 6]{AF84}}]\label{def:CD}
    Let $\B$ be a Jordan algebra over $F$ with a Jordan norm $Q$ of degree $4$ and let $\mu \in F$ be a non-zero constant.
    % \todo{Include def. of Jordan norm / trace?}
    Denote the trace of $Q$ by $t$ and define
    \[ \theta \colon \B \to \B \colon b \mapsto b^\theta := -b + \tfrac{1}{2} t(b) 1 . \]
    We define a structurable algebra $\A = \CD(\B, \mu)$ with underlying vector space $\B \oplus s_0 \B$,
    where $s_0$ is a new symbol, with involution and multiplication given by
    \[ \overline{b + s_0 c} := b - s_0 c^\theta \]
    and
    \[ (b + s_0 c)(d + s_0 e) := \bigl( bd + \mu(ce^\theta)^\theta \bigr) + s_0 \bigl( b^\theta e + (c^\theta d^\theta)^\theta \bigr) \]
    for all $b,c,d,e \in \B$.
    Then $\A$ is a simple structurable algebra of skew-dimension one; see \cite[Theorem 6.6]{AF84}.
\end{definition}

As in \cite{AF84}, we will write $\B_0 := \{ b \in \B \mid t(b) = 0 \}$.
By \cite[Theorem 5.3]{AF84}, $\B_0$ is a
% \todo{Define ``non-unital cubic norm structure''?}
non-unital cubic norm structure with
\begin{align}
    N(b) &:= \tfrac{1}{3} t(b^3), \notag \\
    b^\sharp &:= b^2 - \tfrac{1}{4} t(b^2), \label{eq:bsh} \\
    T(b,c) &:= t(bc) \notag
\end{align}
for all $b,c \in \B_0$.
(The expression $bc$ is the (commuting) product of $b$ and $c$ in the Jordan algebra $\B$.)
Observe that~\eqref{eq:bsh} linearizes to
\begin{equation}\label{eq:bxc}
    b \times c = 2bc - \tfrac{1}{2} t(bc)
\end{equation}
for all $b,c \in \B_0$.

\begin{theorem}\phantomsection\label{th:CD}
    \begin{enumerate}[\rm (i)]
        \item
            Let $\B$ be a Jordan algebra over $F$ with a Jordan norm $Q$ of degree~$4$ and let $\mu \in F$ be a non-zero constant.
            % Let $\A = \CD(\B, \mu)$ be as in Definition~\ref{def:CD}.

            Let $\B_0$ be the non-unital cubic norm structure corresponding to~$\B$,
            let $K$ be the quadratic \'etale extension $K = F[x]/(x^2-\mu)$, and let $\K$ be the hermitian cubic norm structure obtained by
            semilinearly extending $\B_0$ over $K/F$ as in Example~\ref{ex:HCext}.
            Then $\CD(\B, \mu) \cong \A(\K)$.
        \item
            Conversely, let $\J$ be a non-unital cubic norm structure over $F$ with non-degenerate trace,
            let $K/F$ be a quadratic \'etale extension
            and let $\K$ be a hermitian cubic norm structure obtained by semilinearly extending $\J$ over $K/F$ as in Example~\ref{ex:HCext}.

            Then there is a Jordan algebra $\B$ over $F$ with a Jordan norm $Q$ of degree~$4$ and some $\mu \in F$
            such that $\A(\K) \cong \CD(\B, \mu)$.
    \end{enumerate}
\end{theorem}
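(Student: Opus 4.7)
For part~(i), the plan is to exhibit an explicit $F$-linear isomorphism $\phi \colon \CD(\B, \mu) \to \A(\K)$. Writing $K = F[\lambda]/(\lambda^2 - \mu)$ with $\lambda^\sigma = -\lambda$, we have $K = F \oplus \lambda F$ and $J_K = \B_0 \oplus \lambda \B_0$, so both $\CD(\B, \mu) = \B \oplus s_0 \B$ and $\A(\K) = K \oplus J_K$ decompose as fourfold direct sums over $F$. I would define
\[ \phi\bigl(\alpha + b_0 + s_0 \beta + s_0 c_0\bigr) := \bigl( \alpha + \lambda\beta,\; \tfrac{1}{2} b_0 + \tfrac{\lambda}{2} c_0 \bigr) \]
for all $\alpha, \beta \in F$ and $b_0, c_0 \in \B_0$. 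The scaling factor $\tfrac{1}{2}$ on the $J_K$-component is forced by the identity $b_0 c_0 = \tfrac{1}{2}(b_0 \times c_0) + \tfrac{1}{4} T(b_0, c_0)$, which follows from \eqref{eq:bxc} together with $T(b_0, c_0) = t(b_0 c_0)$.

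Since $t(1) = 4$, we have $1^\theta = 1$ and $b_0^\theta = -b_0$ for all $b_0 \in \B_0$, making the involution compatibility direct. For the multiplication, by $F$-bilinearity it suffices to match the products $b_0 \cdot d_0$, $b_0 \cdot (s_0 e_0)$, $(s_0 c_0) \cdot d_0$, and $(s_0 c_0) \cdot (s_0 e_0)$ with $b_0, c_0, d_0, e_0 \in \B_0$, together with the easier products involving $1$ and $s_0$. On the Cayley--Dickson side, Definition~\ref{def:CD} unfolds each into Jordan products in $\B$ which split into $F$- and $\B_0$-components via the identity above. On the $\A(\K)$-side, Corollary~\ref{co:hcns} combined with the semilinear extension formulas of Example~\ref{ex:HCext} produces the factors $\lambda^\sigma = -\lambda$, $\lambda \lambda^\sigma = -\mu$, and $\lambda^{2\sigma} = \mu$. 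For instance, $(s_0 c_0)(s_0 e_0) = -\tfrac{\mu}{4} T(c_0, e_0) + \tfrac{\mu}{2}(c_0 \times e_0)$ on the one side, while $\phi(s_0 c_0)\phi(s_0 e_0) = (0, \tfrac{\lambda}{2} c_0)(0, \tfrac{\lambda}{2} e_0) = \bigl(-\tfrac{\mu}{4} T(c_0, e_0),\; \tfrac{\mu}{4}(c_0 \times e_0)\bigr)$, and these coincide under $\phi$.

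For part~(ii), the plan is to invert the construction $\B \mapsto \B_0$ of \cite[Theorem 5.3]{AF84} and then apply part~(i). Given the non-unital cubic norm structure $\J$ with non-degenerate trace, I would set $\B := F \oplus \J$ as an $F$-vector space with the commutative product
\[ (\alpha + a)(\beta + b) := \bigl( \alpha \beta + \tfrac{1}{4} T(a, b) \bigr) + \bigl( \alpha b + \beta a + \tfrac{1}{2}(a \times b) \bigr), \]
equipped with the quartic form $Q$ whose linear trace $t$ satisfies $t(\alpha + a) = 4\alpha$ and which recovers $N$, $\sharp$, and $T$ on $\B_0 = \J$ via~\eqref{eq:bsh}. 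The Jordan identity for this product reduces to cubic norm structure axioms, in particular Proposition~\ref{pr:cns}(\ref{cns:xx1} and \ref{cns:xx2}) and the trilinear symmetry $T(a \times b, c) = T(a \times c, b)$. Once $(\B, Q)$ is in hand, choose $\mu \in F^\times$ with $K \cong F[x]/(x^2 - \mu)$, which exists since $K/F$ is quadratic étale and $\Char(F) \neq 2$; then part~(i) yields $\CD(\B, \mu) \cong \A(\K')$ where $\K'$ is the semilinear extension of $\B_0 \cong \J$, and $\K' \cong \K$ by construction, giving the desired isomorphism.

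The main obstacle is in part~(ii): one must verify that the commutative product above is Jordan and produce a suitable quartic Jordan norm $Q$ recovering $N$, $\sharp$, $T$ on $\B_0$. The non-degeneracy of $T$ is used here to ensure that this reverse correspondence is well-defined. The verification in part~(i), though lengthy, is essentially routine once the correct rescaling in $\phi$ is identified.
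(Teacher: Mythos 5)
Your proposal follows essentially the same route as the paper. In part (i) your map $\phi$ is exactly the inverse of the isomorphism the paper writes down (the paper maps $(r+s_1t,\,b+s_1c)\mapsto (r.1+2b)+s_0(t.1+2c)$; you rescale by $\tfrac{1}{2}$ in the other direction), and the verification rests on the same two facts: $\theta$ fixes $F.1$ and inverts $\B_0$ (i.e.\ $t(1)=4$), and $b_0c_0=\tfrac{1}{4}T(b_0,c_0)+\tfrac{1}{2}\,b_0\times c_0$; your sample computation of $(s_0c_0)(s_0e_0)$ is correct. The only substantive difference is in part (ii): the step you yourself single out as the main obstacle --- that $\B=F\oplus\J$ with your product is a Jordan algebra carrying a degree-$4$ Jordan norm $Q$ restricting to $(N,\sharp,T)$ on $\B_0\cong\J$ --- is left unproved in your plan (no formula for $Q$, no check of the Jordan identity), whereas the paper disposes of it in one line by citing \cite[Proposition~5.6]{AF84}, which asserts exactly this for non-unital cubic norm structures with non-degenerate trace; so your plan is sound and can be closed either by that citation or by actually carrying out the verification you sketch. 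Note also that your assertion $\K'\cong\K$ ``by construction'' tacitly uses the (easy, and also implicit in the paper) fact that the semilinear extension of Example~\ref{ex:HCext} does not depend on the choice of $\lambda\in K\setminus F$.
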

\begin{proof}
    \begin{enumerate}[(i)]
        \item
            Write $K = F[s_1]$ with $s_1^2 = \mu$ and let $\K = (J, N, \sharp, T)$ with $J = \B_0 \oplus s_1 \B_0$
            and $N$, $\sharp$ and $T$ defined as in Example~\ref{ex:HCext} with $\lambda = s_1$.
            Let $\A = \A(\K)$ as in Corollary~\ref{co:hcns} and let $\A' = \CD(\B, \mu)$ as in Definition~\ref{def:CD}.
            We claim that the map
            \begin{equation}\label{eq:isom}
                \varphi \colon \A \to \A' \colon (r + s_1 t, b + s_1 c) \mapsto (r.1 + 2b) + s_0 (t.1 + 2c)
            \end{equation}
            where $r,t \in F$ and $b,c \in \B_0$,
            is an isomorphism of structurable algebras.

            Notice that the $F$-linear map $\theta$ fixes $F.1$ and inverts $\B_0$.
            In particular, $\varphi(\overline{x}) = \overline{\varphi(x)}$ for all $x \in \A$.
            Moreover, if $b,c \in \B_0$, then by~\eqref{eq:bxc},
            \begin{align*}
                bc &= \tfrac{1}{4} T(b,c) + \tfrac{1}{2} b \times c, \\
                (bc)^\theta &= \tfrac{1}{4} T(b,c) - \tfrac{1}{2} b \times c.
            \end{align*}
            So in $\A'$, we have, for all $b,c,b',c' \in \B_0$,
            \begin{multline*}
                (b + s_0 c)(b' + s_0 c') = \tfrac{1}{4} \Bigl( \bigl( T(b,b') - \mu T(c,c') \bigr) + s_0 \bigl( T(c,b') - T(b,c') \bigr) \Bigr) \\
                    + \tfrac{1}{2} \Bigl( \bigl( b \times b' + \mu c \times c' \bigr) - s_0 \bigl( b \times c' + c \times b' \bigr) \Bigr) ,
            \end{multline*}
            and this shows that the restriction of $\varphi$ to $J$ preserves the multiplication.
            The remaining verifications are straightforward.
        \item
            By \cite[Proposition 5.6]{AF84}, every non-unital cubic norm structure $\J$ with non-degenerate trace can be extended to a
            (non-simple) Jordan algebra $\B = F \oplus \J$ equipped with a Jordan norm of degree~$4$, such that $\B_0 \cong \J$.
            Let $\mu \in F$ be such that $K \cong F[x]/(x^2-\mu)$; then it follows from~(i) that $\A(\K) \cong \CD(\B, \mu)$.
        \qedhere
    \end{enumerate}
\end{proof}

\begin{remark}
    \begin{enumerate}[(i)]
        \item
            A special case of Theorem~\ref{th:CD} occurs in \cite[Lemma 7.4]{AFY08}.
        \item
            If $\mu$ is a square in $F$, then $K/F$ is split, and hence $\A(\K)$ is a matrix structurable algebra.
            In this case, we recover \cite[Proposition 6.5]{AF84},
            and it can be verified that the explicit isomorphism given in the proof of that proposition corresponds to the isomorphism
            $\varphi$ in~\eqref{eq:isom}.
    \end{enumerate}
\end{remark}

\end{document}